\newtheorem{theorem}{Theorem}
\newtheorem{lemma}[theorem]{Lemma}
\newtheorem{con}[theorem]{Conjecture}
\newtheorem{proposition}[theorem]{Proposition}\theoremstyle{remark}
\theoremstyle{definition}
\newtheorem{definition}[theorem]{Definition}
\newcommand{\dist}{\mathrm{dist}}
\newcommand{\supp}{\mathrm{supp\,}}
\newcommand{\R}{\mathbb{R}}
\title[Falconer's distance set problem via the wave equation]
{Falconer's distance set problem\\ via the wave equation}
\date{\today}    
\author{Keith M. Rogers}
\address{Instituto de Ciencias Matem\'aticas CSIC-UAM-UC3M-UCM, Madrid, 28049, Spain.}
\email{keith.rogers@icmat.es}
\thanks{
Mathematics Subject Classification. Primary 28A75; Secondary 42B37}
\thanks{Partially supported by the grants SEV-2015-0554 and MTM2017-85934-C3-1-P (Spain).}
\begin{document}
\begin{abstract} 
Falconer proved that there are sets $E\subset \mathbb{R}^n$ of Hausdorff dimension $n/2$ whose distance sets $\{|x-y| : x,y\in E\}$ are null with respect to Lebesgue measure. This led to the conjecture that distance sets have positive Lebesgue measure as soon the Hausdorff dimension of $E$ is larger than~$n/2$.  The best results in this direction have exploited estimates that restrict the Fourier transform of measures to the $(n-1)$-dimensional sphere. Here we show that these estimates can be replaced by estimates that restrict the Fourier transform of measures to the $n$-dimensional cone. Such estimates were first considered by Wolff in their adjoint form whereby they bound the solution to the wave equation in terms of its initial data. The connection with Falconer's problem, combined with Falconer's counterexample, provides a new necessary condition for what was considered a plausible conjecture for these estimates.
\end{abstract}

\maketitle

\section{Introduction}

In 1946, Erd\"os \cite{Erd} considered the minimal number of distinct distances between~$N$ distinct points in~$\R^n$ with $n\ge 2$. By arranging the points in a lattice, he showed that the minimal number is less than  $N^{2/n}$ and conjectured that  it is not possible to substantially reduce this 
by arranging the points in a different way. The conjecture was recently confirmed by Guth and Katz in the plane~\cite{GK}.

In 1985, Falconer showed that the set of distances associated to a lattice-type structure with Hausdorff dimension $n/2$ can be null with respect to Lebesgue measure; see~\cite{F} or \cite[pp. 63]{M2}. In the converse direction he  proved that
\begin{equation*}
\mathrm{dim} (E) >n/2+1/2\quad \Rightarrow \quad |\{|x-y| : x,y\in E\}|>0.
\end{equation*}
This led to the question of what happens when the Hausdorff dimension is closer to $n/2$. The following conjecture can be found in \cite[pp. 1]{B} or \cite[pp. 58]{M2}. 

\begin{con}\label{falconer} Let $n\ge 2$ and $E\subset \mathbb{R}^n$ be a compact Borel measurable set. Then
$$
\mathrm{dim} (E) >n/2\quad \Rightarrow \quad |\{|x-y| : x,y\in E\}|>0.
$$
\end{con}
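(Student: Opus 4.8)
The plan is to follow Mattila's reduction of the geometric problem to an estimate for spherical averages of Fourier transforms, and then to route that estimate through the wave equation, so that it becomes a sharp restriction estimate for the cone.

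First I would fix a compact $E\subset\Rn$ with $\mathrm{dim}(E)>n/2$ and, via Frostman's lemma, a probability measure $\mu$ supported on $E$ with finite $s$-energy,
\begin{equation*}
I_s(\mu)=\iint|x-y|^{-s}\,d\mu(x)\,d\mu(y)<\infty,\qquad\text{equivalently}\qquad\int_{\Rn}|\widehat\mu(\xi)|^2|\xi|^{s-n}\,d\xi<\infty,
\end{equation*}
for some $s$ with $n/2<s<\mathrm{dim}(E)$. Let $\nu$ be the pushforward of $\mu\times\mu$ under $(x,y)\mapsto|x-y|$, a measure carried by the distance set. By Mattila's integral identity it suffices to prove
\begin{equation*}
\int_1^\infty M(t)^2\,t^{n-1}\,dt<\infty,\qquad\text{where}\qquad M(t):=\int_{\Sn}|\widehat\mu(t\omega)|^2\,d\sigma(\omega),
\end{equation*}
since the left-hand side dominates $\|\nu\|_{L^2}^2$ up to a finite constant, and the finiteness of $\|\nu\|_{L^2}$ forces $\nu$, and hence the distance set, to carry positive Lebesgue measure. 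The energy hypothesis already supplies the averaged bound $\int_0^R M(t)\,t^{n-1}\,dt=\int_{|\xi|\le R}|\widehat\mu(\xi)|^2\,d\xi\lesssim R^{\,n-s}$, which is consistent with $M(t)\sim t^{-s}$; were this valid pointwise, one would obtain $\int_1^\infty M(t)^2t^{n-1}\,dt\lesssim\int_1^\infty t^{\,n-1-2s}\,dt<\infty$ precisely in the range $s>n/2$. The difficulty is that $M$ genuinely spikes far above its average, so the square integral has to be controlled by exploiting oscillation rather than size.

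The second step is to express $M$ and its square integral through the half-wave propagator. Since $M(t)=\iint\widehat{\sigma}(t(x-y))\,d\mu(x)\,d\mu(y)$ and $M(t)\,t^{n-1}$ measures the mass placed by $|\widehat\mu(\xi)|^2\,d\xi$ on the sphere $|\xi|=t$ — the slice at height $t$ of the light cone $\{(\xi,|\xi|):\xi\in\Rn\}$ — controlling $\int_1^\infty M(t)^2t^{n-1}\,dt$ is exactly a restriction-to-the-cone statement for $\mu$. Decomposing $\mu$ dyadically in frequency, with $\mu_R$ the piece at frequencies $|\xi|\sim R$, the scale-$R$ contribution $\int_R^{2R}M(t)^2t^{n-1}\,dt$ becomes comparable to an $L^4$-norm — or, in higher dimensions, to the $\ell^2$-decoupling exponent $L^{2(n+1)/(n-1)}$-norm — of the extension of $\widehat{\mu_R}$ to the cone; in adjoint form it is governed by local-smoothing-type bounds for $\|e^{it\sqrt{-\Delta}}f\|_{L^p(\Rn\times[1,2])}$, with a power gain in $R$, applied to $f=\mu_R$. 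In odd dimensions Huygens' principle makes this passage especially transparent, and in general one argues directly with the multiplier $e^{it|\xi|}$.

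The final step is to insert the sharp restriction estimate for the cone: combined with the energy estimate $\|\mu_R\|_{L^2}^2\approx\int_{|\xi|\sim R}|\widehat\mu(\xi)|^2\,d\xi\lesssim R^{\,n-s}$ at each dyadic scale, the optimal such $L^4$ (or $L^{2(n+1)/(n-1)}$) bound would give $\int_R^{2R}M(t)^2t^{n-1}\,dt\lesssim R^{\,n-2s+\varepsilon}$, which sums over dyadic $R\ge 1$ for every $s>n/2$ once $\varepsilon$ is taken small, completing the proof. The main obstacle is exactly this last input. The sharp restriction estimate for the $n$-dimensional cone that the argument demands is itself a central open problem, stronger than what $\ell^2$-decoupling for the cone and the known local-smoothing results currently deliver; moreover — and this is the point of the present reformulation — testing the argument against Falconer's $n/2$-dimensional example with null distance set reveals that the version of the cone estimate previously regarded as the natural conjecture is in fact too strong, so before it can be applied one must first isolate the correct endpoint statement, incorporating the extra necessary condition that the example forces. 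Short of this, the same scheme still yields the unconditional implication $\mathrm{dim}(E)>s_0(n)\Rightarrow|\{|x-y|:x,y\in E\}|>0$ for a threshold $s_0(n)>n/2$ produced by the best cone estimates currently available.
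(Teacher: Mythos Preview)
The statement is a conjecture, not a theorem; the paper does not prove it, and neither do you --- you correctly flag that the sharp cone input is open. What can be compared is the two conditional strategies.

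Your route and the paper's diverge at the first analytic step. You follow Mattila's $L^2$ reduction: show the pushforward $\nu\in L^2$ by bounding the Mattila integral $\int_1^\infty M(t)^2\,t^{n-1}\,dt$, then try to control that integral via cone restriction. The paper deliberately bypasses Mattila's integral. It uses the Riesz representation theorem to place a weighted pushforward $\nu_\lambda$ in $L^p$ for some $p>1$ that may be strictly less than $2$: one proves $\big|\int g\,d\nu_\lambda\big|\lesssim\|g\|_{p'}$ by writing $g(|x-y|)$ through the spherical average $\sigma_t*\mu$, expanding $\widehat{\sigma_t}$ by stationary phase, and invoking the $L^p(d\mu\times[0,1])$ fractal Strichartz bound of Definition~\ref{demo}. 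The reason for allowing $p<2$ is precisely that the Mattila/$L^2$ route is known to be blocked by the Knapp example in dimensions $2$ and $3$ (the paper recalls this in the introduction), whereas no such obstruction is known for the $L^p$ fractal Strichartz formulation. Your observation that Falconer's lattice example forces a new necessary condition on the wave estimate is exactly one of the paper's contributions --- but the paper derives it for the fractal-Strichartz exponent $\gamma_n(\alpha)$ of Definition~\ref{demo}, not for the Lebesgue-measure cone extension estimates you invoke.

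There is also a gap in your bridge to the cone. The assertion that $\int_R^{2R}M(t)^2\,t^{n-1}\,dt$ is ``comparable to an $L^4$-norm \dots of the extension of $\widehat{\mu_R}$ to the cone'' is not justified: Mattila's integral is a weighted $L^2_t$ norm of the spherical $L^2$-average of $\widehat\mu$, which is not an $L^4_{x,t}$ or $L^{2(n+1)/(n-1)}_{x,t}$ norm of $e^{it\sqrt{-\Delta}}\mu$. The paper's genuine link between Mattila's $\beta_n(\alpha)$ and the smoothing $\gamma_n(\alpha)$ (Proposition~\ref{well}) goes through Plancherel in $t$ and duality, and yields only the one-sided inequality $\gamma_n(\alpha)\ge(\beta_n(\alpha)+1-\alpha)/2$; it does not identify Mattila's integral with a cone-extension $L^p$ norm. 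So even granting sharp cone restriction, your argument would still need a correct passage back to the Mattila integral --- and in low dimensions that passage is exactly where the Knapp obstruction bites, which is why the paper reformulates the problem rather than routing through $M(t)$.
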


Mattila \cite{M} showed how certain decay estimates for the Fourier transform of measures can be used to attack Falconer's problem. By interpreting the decay estimates as restriction estimates on the sphere, progress was made by Sj\"olin~\cite{S} and Bourgain~\cite{B}. Further refinements were made by Wolff in two dimensions \cite{W} and Erdo\u{g}an in higher dimensions~\cite{E3}. Together their work yielded
$$
\mathrm{dim} (E) >n/2+1/3\quad \Rightarrow \quad |\{|x-y| : x,y\in E\}|>0.
$$
Beyond the application to Falconer's problem, decay estimates for the Fourier transform of measures have become objects of study in their own right; see for example~\cite{IL} and the references therein.

What is known as the \lq Knapp example' reveals that Mattila's estimates cannot confirm Conjecture~\ref{falconer} in two or three dimensions; see for example \cite{KT} or \cite{W}. In higher dimensions there is room for optimism, however there are also reasons to doubt that the best possible restriction estimates would imply the full conjecture; see for example  \cite{IR1} or~\cite{IR2}. Here we provide an alternative  approach. 

Mattila showed that the push-forward measure on the distance set is an $L^2$-function by calculating its Fourier transform. For this, the estimates that restrict the Fourier transform to $(n-1)$-dimensional spheres are useful. Instead we make use of the Riesz representation theorem to conclude that the push-forward measure is an $L^p$ function, where $p$ can be less than $2$. For this we require estimates that restrict the Fourier transform to the $n$-dimensional cone. 

The current state-of-the-art for restriction to the cone is far in advance of that for the sphere; see \cite{W2, OW}. This is \lq because' the associated multilinear estimates  are less sensitive to curvature~\cite{BCT}. The counterexamples are large in normal directions to the surface, and the normal directions to the cone are more linearly independent, or {\it transversal}, than the directions normal to the cylinder (which has the same restriction properties as the sphere).

Another potential advantage is that it is not immediately clear why this new approach could not resolve Conjecture~\ref{falconer} in lower dimensions as well. The appropriate restriction estimates to the cone are equivalent, via duality, to Strichartz estimates for the wave equation with respect to fractal measures. In fact, in order to apply them to Falconer's problem we will need the estimates in this adjoint form. Strichartz estimates with respect to fractal measures were first noted as objects of interest by Wolff \cite{W1,W2}. His estimates were improved by Erdo\u{g}an~\cite{E1}, Oberlin~\cite{O} (see also \cite{OO})  and Cho--Ham--Lee \cite{CHL}, who also calculated a number of necessary conditions, none of which prevent a confirmation of Conjecture~\ref{falconer} via this approach. Indeed, Falconer's example provides a new necessary condition for the Strichartz estimates.

A connection between Falconer's problem and the wave equation was recently observed by Liu \cite{Liu}. He showed that the Erdo\u{g}an--Wolff estimates imply a stronger conclusion for the pinned distance sets; that
$$
\dim\big(\big\{ x\in\mathbb{R}^n\,:\, \big|\{|x-y| : y\in E\}\big|=0\big\}\big) \le \tfrac{3}{2}n+1-2\dim(E)
$$
whenever $\dim(E)\in [n/2,(n+1)/2]$. This followed from an $L^2$-estimate which, by Plancherel's identity, is equivalent to a Strichartz estimate for the wave equation (this should be compared with the results of Section~\ref{five}). Here we use pointwise approximations and the Riesz representation theorem to show that results of this type could also follow from $L^p$-estimates.

\section{Definitions and preliminaries}

 We begin by recalling some standard notations regarding the Fourier transform and its inverse, defined as usual by
$$
\widehat{f}(\xi):=\int_{\R^n} e^{-2\pi ix\cdot\xi } f(x)\, dx,\qquad g^\vee(x):=\int_{\R^n} e^{2\pi ix\cdot\xi } g(\xi)\, d\xi. 
$$
Differentiating under the integral, we have $-\Delta f=\big( 4\pi^2|\cdot|^2 \widehat{f}\, \big)^\vee$, and so more generally we write $$m(-\Delta)f:=\big( m(4\pi^2|\cdot|^2) \widehat{f}\, \big)^\vee$$ and define  the fractional Sobolev or Bessel potential spaces by $$H^s(\R^n):=(1-\Delta)^{-s/2}L^2(\R^n)\quad \text{with norm}\quad \|f\|_{H^s}:=\|(1-\Delta)^{s/2}f\|_{L^2}.$$

We will consider real and even solutions $u$ to the wave equation $\partial_{tt}u=\Delta u$ with initial data in these spaces. Specifically we consider the solutions 
\begin{equation*}
u(x,t):=\cos\big(t\sqrt{-\Delta}-\tfrac{n-1}{4}\pi\big)u_0(x).\end{equation*} We will prove $L^p$-estimates for these solutions with respect to fractal measures. That is, positive, compactly supported, Borel measures~$\mu$ that satisfy the growth condition $$
[\mu]_{\alpha} := \sup_{\substack{x \in \R^n \\ r > 0}}
\frac{\mu(B(x,r))}{r^{\alpha}} < \infty.
$$
Here and throughout, $B(x,r)$ denotes the open ball, centred at $x$ and of radius~$r$. 

In the Appendix A we will see that, for any fixed time $t\in \mathbb{R}$,\footnote{We write $A \lesssim B$ if $ A\leq C B$ for some constant $C>0$ which may depend on the diameter of the support of $\mu$ and may change from line to line.}
\begin{equation}\label{ftt}
\Big(\int|u(x,t)|^p\,d\mu(x)\Big)^{1/p} \lesssim [\mu]^{1/p}_{\alpha}\|u_0\|_{H^s(\R^n)},\quad s>\frac{n-\alpha}{2},
\end{equation}
where $1\le p\le 2$ and $0<\alpha<n$.
In the following definition we consider how much smoother the solution becomes after integrating locally in time. These estimates are similar to the {\it local smoothing} estimates considered in  \cite[Corollary 3.3]{MSS}, however they can also be viewed as Strichartz estimates with respect to fractal measures.

\begin{definition}\label{demo} Let $\gamma_n(\alpha)$ denote the supremum of  $\gamma\ge 0$ such that, for some $p>1$, 
$$
\Big(\int_0^1\int |u(x,t)|^p  d\mu(x)dt\Big)^{1/p} \lesssim [\mu]^{1/p}_{\alpha}\|u_0\|_{H^{\frac{n-\alpha}{2}-\gamma}(\R^n)}
$$
whenever $u_0$ and $\mu$ are real and even.
\end{definition}

\section{Falconer's problem via Strichartz estimates}

Combining the following theorem with Falconer's example, we obtain upper bounds for the smoothing in the range  $(n-1)/2< \alpha< n/2$. Indeed we see that $\inf_{\alpha<n/2}\gamma_n(\alpha)\le 1/2$ which contradicts what was considered a plausible conjecture for the Strichartz estimates with $p<2$; see \cite[pp. 62]{CHL} or the following section.
On the other hand, by simply integrating the fixed-time estimate, we see that $\gamma_n(\alpha)$ is at least non-negative from which we recover Falconer's positive result.  Therefore, improvements to this would follow by quantifying the smoothing. In particular Conjecture~\ref{falconer} would follow by proving that there is at least half a derivative of smoothing when $\alpha=n/2$. In the fifth section,  will prove lower bounds for the smoothing as soon as $\alpha<n$, obtaining the half derivative for all $\alpha\le (n-1)/2$.

\begin{theorem}\label{wells}
Let $\frac{n-1}{2}<\alpha<\frac{n+1}{2}$ and suppose that  $\gamma_n(\alpha)\ge \frac{n+1}{2}-\alpha$. Then 
$$
\mathrm{dim} (E) >\alpha\quad \Rightarrow \quad |\{|x-y| : x,y\in E\}|>0.
$$
\end{theorem}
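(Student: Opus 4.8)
The plan is to show that if $\dim(E)>\alpha$ then the push-forward of a Frostman measure on $E\times E$ under the distance map is an $L^p$ function for some $p>1$, hence its support (which contains the distance set) has positive Lebesgue measure. First I would fix $s$ with $\alpha<s<\dim(E)$ and, by Frostman's lemma, choose a probability measure $\mu$ supported on $E$ with $[\mu]_s<\infty$; since we only need the weaker growth exponent, $\mu$ also satisfies $[\mu]_\alpha<\infty$. We may assume $E$ (hence $\mu$) is contained in a ball of fixed radius. Let $\delta_\sharp(\mu\times\mu)$ denote the push-forward of $\mu\times\mu$ under $(x,y)\mapsto |x-y|$, a compactly supported measure on $[0,\infty)$ whose support is contained in the distance set. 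It suffices to prove that $\delta_\sharp(\mu\times\mu)\in L^{p'}$ for some $p'\in(1,\infty)$, because a nonzero $L^{p'}$ density cannot be supported on a Lebesgue-null set.

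By the Riesz representation theorem, $\delta_\sharp(\mu\times\mu)\in L^{p'}$ follows from the bilinear bound
$$
\Big|\int \psi(|x-y|)\,d\mu(x)\,d\mu(y)\Big| \lesssim \|\psi\|_{L^p(\mathbb{R})}
$$
for all nice $\psi$, with $p$ the conjugate exponent. The key step is to realize the kernel $\psi(|x-y|)$ as (an average over $t$ of) solutions to the wave equation. Using the Fourier representation of radial functions one writes $\psi(|x-y|)$ in terms of $\widehat{\psi}$ integrated against $e^{2\pi i\rho|x-y|}$; the factor $e^{2\pi i\rho|x-y|}$ is, up to lower-order terms and the standard $\tfrac{n-1}{4}\pi$ phase built into the operator $\cos(t\sqrt{-\Delta}-\tfrac{n-1}{4}\pi)$, the propagator $e^{it\sqrt{-\Delta}}$ evaluated at $x-y$. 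Concretely, setting $t=\rho$ (or integrating $\rho$ over a fixed interval, matching the $t\in[0,1]$ range in Definition~\ref{demo} after a rescaling/localization) one gets
$$
\int \psi(|x-y|)\,d\mu(x)\,d\mu(y) \approx \int_0^1 \int \big(u_t \ast \widetilde{u_t}\big)\cdots
$$
so that, after pairing, the quantity is controlled by $\big(\int_0^1\!\int |u(x,t)|^p\,d\mu(x)\,dt\big)^{2/p}$ for a suitable choice of initial data $u_0$ depending on $\widehat\psi$ and $\mu$. Here realness and evenness are arranged by symmetrizing; this is why Definition~\ref{demo} is stated for real, even data.

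Now apply Definition~\ref{demo}: since $\gamma_n(\alpha)\ge \tfrac{n+1}{2}-\alpha$, we may pick $\gamma$ with $\tfrac{n+1}{2}-\alpha<\gamma\le\gamma_n(\alpha)$ and some $p>1$ for which
$$
\Big(\int_0^1\!\int |u(x,t)|^p\,d\mu(x)\,dt\Big)^{1/p} \lesssim [\mu]_\alpha^{1/p}\,\|u_0\|_{H^{\frac{n-\alpha}{2}-\gamma}}.
$$
It remains to check that the $H^{\frac{n-\alpha}{2}-\gamma}$ norm of the initial data we built is finite and controlled by $\|\psi\|_{L^p}$. The initial data has frequency profile essentially $\widehat\psi(\rho)$ times the angular localization coming from $\mu$, and the Sobolev weight $|\xi|^{\frac{n-\alpha}{2}-\gamma}$ combines with the Jacobian $\rho^{n-1}$ from polar coordinates and with $\widehat{\mu}$; using $[\mu]_\alpha<\infty$ (equivalently $\int |\widehat{\mu}(\xi)|^2|\xi|^{\alpha-n}\,d\xi<\infty$, the energy estimate) one finds the exponent works out precisely when $\gamma>\tfrac{n+1}{2}-\alpha$, which is our hypothesis; the strict inequality gives the small amount of room needed to absorb logarithmic losses and to bound $\|u_0\|_{H^{\frac{n-\alpha}{2}-\gamma}}$ by $\|\psi\|_{L^p}$ rather than a slightly stronger norm. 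Chaining the inequalities yields the desired bilinear estimate, hence $\delta_\sharp(\mu\times\mu)\in L^{p'}$ and the conclusion.

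The main obstacle, and the place where the argument has to be done carefully rather than sketched, is the second step: turning the purely geometric kernel $\psi(|x-y|)$ into an honest pairing of wave solutions against $\mu$, keeping track of the half-wave decomposition $\cos = \tfrac12(e^{it\sqrt{-\Delta}}+e^{-it\sqrt{-\Delta}})$, the $\tfrac{n-1}{4}\pi$ phase, the low-frequency part (where the cone degenerates and one argues trivially), and the passage from "$t=\rho$" to "$t\in[0,1]$" via scaling and the compact support of $\mu$. One must also verify that the contribution of $\widehat\psi$ near the origin and the non-stationary/error terms in the asymptotics of the kernel $e^{2\pi i\rho|z|}$ versus the exact half-wave propagator are genuinely lower order. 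Once this dictionary is set up correctly, the rest is bookkeeping with Sobolev exponents and Hölder in $t$.
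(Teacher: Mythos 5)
Your high-level plan --- Frostman measure, push-forward onto the distance set, Riesz representation with a $p>1$ duality, and a wave-propagator dictionary feeding Definition~\ref{demo} --- is the paper's plan. But the crucial middle step, which you rightly flag as ``the main obstacle,'' is where the proposal goes wrong, and it is not a bookkeeping issue. You try to Fourier-transform the test function $\psi$ first, writing $\psi(|x-y|)$ in terms of $e^{2\pi i\rho|x-y|}$, and then build wave data ``depending on $\widehat\psi$ and $\mu$.'' The paper does the opposite and it matters: integrate $d\mu(y)$ first, so that $\int\psi(|x-y|)\,d\mu(y)=\int \psi(t)\,\sigma_t\ast\mu(x)\,dt$, dyadically decompose $\mu=\sum_k\mu_k$ in frequency (so each $\mu_k$ is smooth and Fourier-supported in $\{|\xi|\sim 2^k\}$), and then use the asymptotics of $\widehat{\sigma_t}$ to identify $\sigma_t\ast\mu_k$ with $(2^kt)^{-(n-1)/2}\cos(t\sqrt{-\Delta}-\tfrac{n-1}{4}\pi)\mu_k$ plus remainders. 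The wave data is the dyadic piece $\mu_k$ of the measure itself, not anything involving $\widehat\psi$; the test function $\psi$ is disposed of by H\"older in $(x,t)$, which produces $\|\psi\|_{p'}\cdot\|\sigma_t\ast\mu_k\|_{L^p(d\mu\times dt)}$. In particular the controlled quantity enters to the power $1/p$, not $2/p$ as you wrote, and your $2/p$ would not be compatible with the $[\mu]_\alpha^{1/p}$ scaling in Definition~\ref{demo}.

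Two further ingredients you omit are not cosmetic. First, the Bessel asymptotics of $\widehat{\sigma_t}$ carry a tail of error terms of size $(2^kt)^{-j}$, and the region $t\to 0$ is not small in measure; the paper introduces an auxiliary weight $|x-y|^\lambda$ (after translating $E$ away from the origin so $|x-y|\sim 1$ on the support) precisely so that the $t\lesssim 2^{-nk/\lambda}$ region and the remainder sum can be absorbed by a convergent geometric series in $k$. Without that weight the remainder does not sum. Second, the theorem's hypothesis is the non-strict $\gamma_n(\alpha)\ge\tfrac{n+1}{2}-\alpha$, so ``pick $\gamma$ with $\tfrac{n+1}{2}-\alpha<\gamma\le\gamma_n(\alpha)$'' can be vacuous; the paper first passes from $\alpha$ to $\alpha'\in(\alpha,\dim E)$ and uses monotonicity of $\alpha\mapsto\gamma_n(\alpha)+\alpha$ to turn the hypothesis into a genuine strict inequality at $\alpha'$. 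Finally, a minor but real slip: $[\mu]_\alpha<\infty$ is not equivalent to finite $\alpha$-energy $\int|\widehat\mu(\xi)|^2|\xi|^{\alpha-n}\,d\xi<\infty$; the ball condition gives finite $s$-energy only for $s<\alpha$, and in any case the paper's verification that the Sobolev norm is under control goes through $\|\mu_k\|_2\lesssim 2^{\frac{n-\alpha}{2}k}\|\mu\|^{1/2}[\mu]_\alpha^{1/2}$ (interpolating $\|\mu_k\|_1$ with the pointwise bound $\|\mu_k\|_\infty\lesssim 2^{(n-\alpha)k}[\mu]_\alpha$), not through an energy integral.
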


\begin{proof} If $\dim(E)>\alpha'>\alpha$, then Frostman's lemma tells us that $E$ supports a positive measure that satisfies
$
[\mu]_{\alpha'}<\infty$. As in Mattila's  approach, we consider something similar to the push-forward of $\mu\times\mu$ under the distance map $(x,y)\to |x-y|$ defined by
$$
\nu(F)= \mu\times\mu \{\,(x,y)\,:\, |x-y|\in F\,\}.
$$
We hope to prove that  $\nu$ is absolutely continuous with respect to Lesbesgue measure,  allowing us to conclude that its support, that is the distance set, has positive Lebesgue measure (if $d\nu(t)$ were equal to $h(t)dt$ with $h$ zero almost everywhere, then $\nu$ would be identically zero, contradicting that $\nu([0,\infty))=\|\mu\|^2>0$).  In order to achieve this we will use the Riesz representation rather than calculating the Fourier transform of $\nu$ as in \cite{M}.

As there are fewer compactly supported measures satisfying $[\mu]_{\alpha'}<\infty$ than those that satisfy $[\mu]_{\alpha}<\infty$, by definition the function given by $\alpha\mapsto \gamma(\alpha)+\alpha/2$ is increasing, so that $\alpha\mapsto\gamma(\alpha)+\alpha$ is strictly increasing. Therefore, after relabelling $\alpha'$ by  $\alpha$, it will suffice to prove that the measure $\nu_\lambda$, with $\lambda \in \mathbb{R}$, defined by
$$
\int g(t)\, d\nu_\lambda(t)= \int g(|x-y|)\,|x-y|^\lambda d\mu(x)d\mu(y),
$$ is a function whenever  $[\mu]_{\alpha}<\infty$ and
\begin{equation}\label{reft}
\gamma_n(\alpha)> \frac{n+1}{2}-\alpha.
\end{equation}
For this we will prove that
\begin{equation}\label{it}
\Big|\int g(t)\, d\nu_\lambda(t)\Big| \le C\big([\mu]_{\alpha},\|\mu\|\big)\,\|g\|_{p'}, 
\end{equation}
for some $p>1$, whenever  $[\mu]_{\alpha}<\infty$ and \eqref{reft} holds. Then  the measure $\nu_\lambda$ 
gives rise to a bounded linear functional
which, by the Riesz representation \cite[pp. 284]{R}, can be rewritten
$$
\int g(t)\, d\nu_\lambda(t)=\int g(t)\, h(t)\, dt
$$
where $\|h\|_p\le C([\mu]_{\alpha},\|\mu\|)$. As $\nu_\lambda$ is not identically zero, and $h$ is supported on the distance set, this implies that the distance set has positive Lebesgue measure.

Noting that both  the Hausdorff dimension and distance set of $E$ are unchanged under translations, we can position $E$ away from the origin. The implicit constants throughout depend on the diameter of the support of~$E$, an unimportant parameter. From now on we suppose that this diameter is bounded by one, and we position the set so that the distance between $E$ and the origin is greater than one. Then we can extend the measure $\mu$ in an even manner, so that $\mu(-x)=\mu(x)$. This does not change the value of $[\mu]_\alpha$ and only doubles the size of $\|\mu\|$. Now
 \begin{equation*}
\int g(t)\, d\nu_\lambda(t) = \frac{1}{2}\int\!\!\int \mathbf{1}_{[0,1]}(|x-y|)g(|x-y|)\,|x-y|^\lambda d\mu(x)d\mu(y), 
\end{equation*}
and so we can work with the extended, even measures from now on. 
  
Letting $\phi$ be a smooth and real function 
supported in $[-1,1]$ and equal to one on $[-1/2,1/2]$,  we decompose $\mu=\sum_{k\ge 0} \mu_k$ into smooth functions defined by
\begin{equation}\label{fourier}\widehat{\mu_0}(\xi)=\phi(|\xi|)\widehat{\mu}(\xi),\qquad\widehat{\mu_k}(\xi)=\big(\phi(2^{-k}|\xi|)-\phi(2^{-k+1}|\xi|)\big)\widehat{\mu}(\xi),\end{equation}
where  $k\ge 1$ and 
$$
\widehat{\mu}(\xi)=\int_{\R^n} \cos(2\pi x\cdot\xi)\, d\mu(x).
$$
Letting $\psi$ denote the Fourier transform of $\phi(|\cdot|)-\phi(2|\cdot|)$, we have the convolution representation 
$$\mu_k =2^{nk}\psi(2^k\cdot)\ast \mu,\quad k\ge 1.$$ 
Note that the functions $\mu_k$ are real-valued and even. It is also easy to calculate that they are bounded \begin{equation}\label{bound}\|\mu_k\|_\infty\lesssim 2^{(n-\alpha)k}[\mu]_{\alpha},\end{equation} using the rapid decay of $\psi(2^k\cdot)$ away from $B(0,2^{-k})$; see  the Appendix A. 
 
Letting $\sigma_t$ denote the surface measure on the sphere of radius $t$, by Fubini's theorem and H\"older's inequality, we can write
\begin{align*}
\Big|\int g(t)\, d\nu_\lambda(t)\Big|&\le \sum_{k\ge 0}\Big|\int\!\!\int_0^1 g(t)\, t^\lambda\sigma_t\ast \mu_k(x)\,dt\, d\mu(x)\Big|\\
&\le \sum_{k\ge 0}\|\mu\|^{1/p'}\|g\|_{p'}\Big(\int\!\!\int_0^1 \big| t^\lambda \sigma_t\ast \mu_k(x)\big|^pdtd\mu(x)\Big)^{1/p}.
\end{align*}
 Thus, to complete the proof, it will suffice to bound the integrals of the right-hand side by constants that decay sufficiently fast  in $k$ so that we can sum, yielding~\eqref{it}.

 First we note that  by \eqref{bound} we easily have
 $|t^\lambda \sigma_t\ast \mu_k(x)|\lesssim t^{n-1+\lambda }2^{(n-\alpha)k}[\mu]_{\alpha},
$
so that
$$
\Big(\int\!\!\int_0^{2^{-nk/\lambda}} |t^\lambda\sigma_t\ast \mu_k(x)|^p dt d\mu(x)\Big)^{1/p}\lesssim 2^{-\alpha k}[\mu]_{\alpha}\|\mu\|^{1/p}.
$$
Thus, we can sum a geometric series in $k$ to bound this part of the integral.  This constitutes the whole integral when $k=0$ and so we consider $k\ge 1$ from now on.

By standard properties of the Fourier transform of the surface measure on the sphere, see for example \cite[pp. 338 \& 347]{stein}, we can bound $\big|\sigma_t\ast \mu_k\big|$ by a constant multiple of
\begin{align*}\label{link}
&\ (2^{-k}t)^\frac{n-1}{2}\sum_{j\ge0} (2\pi2^kt)^{-j}\Big(\big|e^{it\sqrt{-\Delta}} \mu_k(x)\big|+\big|e^{-it\sqrt{-\Delta}} \mu_k(x)\big|\Big)\\
\lesssim\ &\ 
2^{-\frac{n-1}{2}k}|\cos\big(t\sqrt{-\Delta}-\tfrac{n-1}{4}\pi\big)\mu_k|+2^{-\frac{n+1}{2}k}2^{\frac{n}{\lambda}k}\Big(\big|e^{it\sqrt{-\Delta}} \mu_k\big|+\big|e^{-it\sqrt{-\Delta}} \mu_k\big|\Big)
\end{align*}
in the range $2^{-nk/\lambda}\le t\le 1$. Integrating this, we are led to consider estimates for the wave equation where time is real, but space is fractal. 

First we consider the second term which can be considered to be a remainder term. By the forthcoming Lemma~\ref{ft}, we have that
$$
\Big(\int|e^{\pm it\sqrt{-\Delta}} f(x)|^p\,d\mu(x)\Big)^{1/p} \lesssim [\mu]^{1/p}_{\alpha}\|f\|_{H^s(\R^n)},
$$
for all $t\in \mathbb{R}$ and all $s>(n-\alpha)/2$.
Interpolating between $\|\mu_k\|_1\le \|\mu\|$ and \eqref{bound}, we have
\begin{equation}\label{top}
\|\mu_k\|_{2}\le \|\mu_k\|_1^{1/2}\|\mu_k\|_\infty^{1/2}\lesssim 2^{\frac{n-\alpha}{2}k}\|\mu\|^{1/2}[\mu]_{\alpha}^{1/2},
\end{equation}
and so by plugging this into the fixed-time estimate, we obtain
\begin{equation*}
\Big(\int_0^1 \int |e^{\pm it\sqrt{-\Delta}} \mu_k(x)|^p d\mu(x)dt\Big)^{1/p} \lesssim 2^{sk}\|\mu\|^{1/2}[\mu]_{\alpha}^{1/p+1/2},
\end{equation*}
for all $s>n-\alpha$. Multiplying both sides by $2^{-\frac{n+1}{2}k}2^{\frac{n}{\lambda}k}$, we can take $\lambda$ sufficiently large, given that $\alpha>(n-1)/2$, so that the resulting power of $2^{k}$ is negative and we can sum a geometric series.

To deal with the main term, it would suffice to prove that
\begin{equation}\label{tr}
\Big(\int_0^1 \int \big|\cos\big(t\sqrt{-\Delta}-\tfrac{n-1}{4}\pi\big)\mu_k(x)\big|^p d\mu(x)dt\Big)^{1/p} \lesssim 2^{sk}[\mu]_{\alpha}^{1/p}\|\mu_k\|_{2},
\end{equation}
for some 
$
s<(\alpha-1)/2$. This is because, after multiplying both sides by $2^{-\frac{n-1}{2}k}$ and plugging in \eqref{top}, we would be able to sum a geometric series again, completing the proof of \eqref{it}. Now our assumption \eqref{reft} gives us this estimate, completing the proof.
\end{proof}

\section{Necessary conditions for Strichartz estimates}

In much of the previous literature, the Strichartz estimates have been considered without imposing product structure on the measure; 
\begin{equation}\label{itt}
\Big(\int\!\!\int |e^{it\sqrt{-\Delta}} f(x)|^p  d\nu(x,t)\Big)^{1/p} \lesssim [\nu]^{1/p}_{\alpha}\|f\|_{H^{s}(\R^n)}.
\end{equation}
Here the estimate is supposed to hold uniformly for all $f\in H^{s}(\R^n)$ and all compactly supported and positive Borel measures $\nu$ that satisfy $$
[\nu]_{\alpha} := \sup_{\substack{x \in \R^{n+1} \\ r > 0}}
\frac{\nu(B(x,r))}{r^{\alpha}} < \infty.
$$
Adapting a two-dimensional calculation of Erdo\u{g}an~\cite{E1}, Cho, Ham and Lee \cite[Proposition 1.5]{CHL} proved that $s\ge s(\alpha,p,n)$ is necessary for \eqref{itt} to hold, where
$$
s(\alpha,p,n):=\left \{
\begin{array}{rccccccl}
  \!\!\!\!&\max\Big\{ \frac{n}{2}-\frac{\alpha}{p},\frac{n+1}{4}\Big\}   &\mathrm{when}\quad& 0<\!\!\!&\!\! \alpha\!\!&\!\!\!\le\!\!\!&\!\!\!\!\!\!\!\!\!\!\!\! 1&\\ [0.8ex] 
  \!\!\!\!&\max\Big\{ \frac{n}{2}-\frac{\alpha}{p},\frac{n+1}{4}-\frac{\alpha-1}{2p},  \frac{n+2}{4}-\frac{\alpha}{4}\Big\}   &\mathrm{when}\quad& 1<\!\!\!&\!\! \alpha\!\!&\!\!\!\le\!\!\!&\!\!\!\!\!\!\!\!\!\!\!\! n&\\ [0.8ex] 
  \!\!\!\!&\max\Big\{ \frac{n}{2}-\frac{\alpha}{p},\frac{n+1}{4}-\frac{2\alpha-(n+1)}{2p},  \frac{n+1}{2}-\frac{\alpha}{2}\Big\}   &\mathrm{when}\quad& n<\!\!\!&\!\! \alpha\!\!&\!\!\!\le\!\!\!&\!\! n+1.&\\ [0.8ex] 
\end{array}\right.
$$ 
Noting that $s(\alpha+1,p,n)< (n-2)/4$ for certain $p<2$ and  $\alpha<n/2$,  we proved a new necessary condition in the previous section, however we will now go further. 

In the opposite direction, with $n=2$ or $3$, it is known that \eqref{itt} holds if
\begin{equation}\label{try}
s>\left \{
\begin{array}{rccccccl}
  s(\alpha,2,n)   &\mathrm{when}\quad& 1\le\!\!\!&\!\! p\!\!&\!\!\!\le\!\!\!&\!\!\!\!\! 2&\\ [0.8ex]
 s(\alpha,p,n)   &\mathrm{when}\quad& 2\le\!\!\!&\!\! p\!\!&\!\!\!\le\!\!\!&\!\!\! \infty,& \\ [0.8ex] 
\end{array}\right.
\end{equation}
where $0<\alpha\le n+1$.
The two-dimensional result is due to Erdo\u{g}an~\cite[Corollary~1]{E1} (see the Appendix B for the range $0<\alpha<1$) and the three-dimensional result is due to Cho, Ham and Lee~\cite[Theorem 1.2]{CHL}.
In fact we will see that \eqref{try} is almost necessary as well. Indeed the Strichartz estimate \eqref{itt} can fail in any dimension if $s<s(\alpha,2,n)$ with $1\le p\le 2$. 

An inspection of the definition of $s(\alpha,p,n)$ reveals that to see this it will suffice to prove that $s\ge (n-\alpha)/2$ is necessary when $0<\alpha<n$. For a contradiction we suppose otherwise and take $\alpha'$ such that  $s<(n-\alpha')/2<(n-\alpha)/2$. Then by Frostman's lemma, an $\alpha'$-dimensional compact set $E\subset \mathbb{R}^d$ supports a measure $\mu$ such that $[\mu]_{\alpha}<\infty$. Letting $d\nu(x,t)=d\mu(x)d\delta_0(t)$, where $\delta_0$ denotes the Dirac delta, we have that  $[\nu]_{\alpha}=[\mu]_{\alpha}<\infty$. On the other hand, letting $\psi$ be a cut-off function, equal to one on $E$,  the function $f$, defined by
$$
f(x):=\psi(x)\dist(E,x)^{-\lambda},
$$
 is a member of  $L^2(\R^n)$ whenever $\lambda<(n-\alpha')/2$; see \cite[Lemma 3.6]{Mo}. Then, assuming \eqref{itt}, we would have
\begin{align*}
\Big(\int |(1-\Delta)^{-s/2}f(x)|^p  d\mu(x)\Big)^{1/p}
&=\Big(\int\!\!\int |e^{it\sqrt{-\Delta}}(1-\Delta)^{-s/2} f(x)|^p  d\nu(x,t)\Big)^{1/p}\\
&\le [\nu]^{1/p}_{\alpha}\|(1-\Delta)^{-s/2}f\|_{H^{s}(\R^n)}\\
&\le [\mu]^{1/p}_{\alpha}\|f\|_{L^2(\R^n)}<\infty.
\end{align*}
Now if we take $\lambda$ greater than $s$, as we may, then $(1-\Delta)^{-s/2}f$ is still singular on~$E$, the support of $\mu$; see~\cite{Z}. Thus, the left-hand side would be infinite, an apparent contradiction, and so we conclude that $s\ge (n-\alpha)/2$ is necessary for \eqref{itt} to hold.

Returning to Falconer's problem, we are only interested in measures of the form $d\nu(x,t)=d\mu(x)\mathbf{1}_{[0,1]}(t)dt$, and so we need not be entirely discouraged by the previous necessary conditions. We can however deduce necessary conditions for our more specific problem via the following lemma in the spirit of Sobolev embedding.

\begin{lemma}\label{ittt} Suppose that
\begin{equation}\label{strichartz}
\Big(\int_0^1\int |e^{it\sqrt{-\Delta}} f(x)|^p  d\mu(x)dt\Big)^{1/p} \lesssim [\mu]^{1/p}_{\alpha}\|f\|_{H^{s}(\R^n)}
\end{equation}
whenever $f\in H^{s}(\R^n)$ and $[\mu]_\alpha<\infty$.  Then, for all $s'>s+1/p$, 
\begin{equation}\label{maximal}
\Big(\int \sup_{t\in[0,1]}|e^{it\sqrt{-\Delta}} f(x)|^p  d\mu(x)\Big)^{1/p} \lesssim [\mu]^{1/p}_{\alpha}\|f\|_{H^{s'}(\R^n)}.
\end{equation}
\end{lemma}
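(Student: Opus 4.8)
The plan is to prove the estimate first for a single Littlewood--Paley piece of~$f$, with a loss of~$2^{k/p}$, and then to sum. Write $f=\sum_{k\ge0}f_k$ for the usual smooth Littlewood--Paley decomposition, so that $\widehat{f_k}$ is supported in $\{|\xi|\sim 2^k\}$ for $k\ge1$ and in $\{|\xi|\lesssim1\}$ for $k=0$; we may assume $f$ is Schwartz and argue by density afterwards. Since $\sup_{t}|e^{it\sqrt{-\Delta}}f(x)|\le\sum_k\sup_t|e^{it\sqrt{-\Delta}}f_k(x)|$ pointwise in~$x$, Minkowski's inequality in $L^p(d\mu)$ reduces \eqref{maximal} to proving
\[
\Big(\int \sup_{t\in[0,1]}|e^{it\sqrt{-\Delta}}f_k(x)|^p\,d\mu(x)\Big)^{1/p}\lesssim 2^{k/p}\,[\mu]^{1/p}_{\alpha}\,\|f_k\|_{H^{s}}
\]
for each $k$, since then $\sum_{k}2^{k/p}\|f_k\|_{H^s}\sim\sum_k 2^{-k(s'-s-1/p)}\,2^{ks'}\|f_k\|_{L^2}\lesssim\|f\|_{H^{s'}}$ by Cauchy--Schwarz in~$k$ (the geometric factor is summable because $s'>s+1/p$) and almost orthogonality.

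The key point is a Bernstein inequality in the time variable. For fixed $x$, the function $t\mapsto e^{it\sqrt{-\Delta}}f_k(x)=\int e^{2\pi i(x\cdot\xi+t|\xi|)}\widehat{f_k}(\xi)\,d\xi$ has Fourier transform (in~$t$) supported in $\{\,\tau\in[2^{k-1},2^{k+1}]\,\}$. Choosing $\chi\in C^\infty_c(\mathbb{R})$ with $\chi\equiv1$ on $[0,1]$, reproducing $e^{it\sqrt{-\Delta}}f_k$ from a frequency projector at scale~$2^k$ and controlling the resulting Schwartz tails gives
\[
\sup_{t\in[0,1]}|e^{it\sqrt{-\Delta}}f_k(x)|\le \big\|\,\chi\, e^{i\,\cdot\,\sqrt{-\Delta}}f_k(x)\big\|_{L^\infty(\mathbb{R})}\lesssim 2^{k/p}\Big(\int_{I}|e^{it\sqrt{-\Delta}}f_k(x)|^p\,dt\Big)^{1/p},
\]
where $I$ is a fixed bounded interval containing $\supp\chi$, up to terms that are $O(2^{-Nk})$ for every~$N$ and hence harmless. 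Raising to the $p$-th power, integrating $d\mu(x)$, and using that the local-in-time estimate \eqref{strichartz} on $[0,1]$ implies the same estimate on any fixed bounded interval (cover by unit intervals and use that $e^{ij\sqrt{-\Delta}}$ is an isometry on $H^s$), we obtain the displayed bound for~$f_k$.

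The main technical obstacle is precisely the Bernstein step: one has to check that multiplying the (non-decaying) solution by the cutoff~$\chi$ does not destroy the frequency localisation beyond acceptable rapidly decreasing errors, and that the time interval in \eqref{strichartz} may be enlarged at the cost of a fixed constant; neither is serious. An essentially equivalent route, more transparently ``in the spirit of Sobolev embedding'', is to apply the one-dimensional embedding $W^{\theta,p}(\mathbb{R})\hookrightarrow L^\infty(\mathbb{R})$ with $\theta=s'-s>1/p$ directly to $t\mapsto\chi(t)e^{it\sqrt{-\Delta}}f(x)$, using that $|\partial_t|^\theta$ acts on the solution as $(-\Delta)^{\theta/2}$ on the data, so that \eqref{strichartz} applied to $f$ and to $(-\Delta)^{\theta/2}f$ controls both the $L^p$ and the homogeneous $\dot W^{\theta,p}$ norms in time, modulo the (lower-order) commutator with~$\chi$.
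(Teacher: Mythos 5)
Your proposal is correct, and in structure it is the same as the paper's: decompose $f$ dyadically, apply the maximal bound to each piece with a loss of $2^{k/p}$, then use the Strichartz estimate \eqref{strichartz} and sum a geometric series (using $s'>s+1/p$). The one place where you differ is in how the factor $2^{k/p}$ is extracted. You argue via a one-dimensional Bernstein inequality in the time variable, multiplying by a smooth cutoff $\chi$ before taking the frequency projection and then controlling the resulting Schwartz tails; this works (the tails contribute $O(2^{-Nk})$ and are harmless after the geometric sum), but it requires the error analysis and the slight enlargement of the time interval that you rightly flag as the technical crux. The paper instead uses the elementary Fundamental Theorem of Calculus bound
\[
\sup_{t\in[0,1]}|F(t)|^p\le \|F\|_{L^p[0,1]}^p + p\,\|F\|_{L^p[0,1]}^{p-1}\|F'\|_{L^p[0,1]},
\]
applied to $F(t)=e^{it\sqrt{-\Delta}}f_k(x)$: since $F'=i\sqrt{-\Delta}F$ and $\sqrt{-\Delta}$ costs a factor $2^k$ on a Littlewood--Paley piece, one obtains the same $2^{k/p}$ loss after H\"older and \eqref{strichartz}, with no cutoff, no frequency projector, and no tail estimates. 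The two mechanisms are morally the same (time differentiation of a frequency-$2^k$-localised solution costs $2^k$), but the FTC route is technically cleaner, whereas yours makes the connection to Sobolev embedding --- which motivates the lemma --- more transparent. Your second alternative, via $W^{\theta,p}(\mathbb{R})\hookrightarrow L^\infty(\mathbb{R})$, is again a repackaging of the same idea, with the same cutoff/commutator caveat.
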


\begin{proof}
By the Fundamental Theorem
of Calculus,
$$
|F(t)|^{p}-|F(\tau)|^{p}\le {p}\int_{0}^1 |F(y)|^{{p}-1}
|F'(y)|\,dy.
$$
for all $t,\tau\in [0,1]$. Integrating in $\tau$, and by H\"older's
inequality,
$$
\sup_{t\in [0,1]}|F(t)|^p-\|F\|^p_{L^{p}[0,1]}\le 
p\|F\|_{L^{p}[0,1]}^{p-1}\|F'\|_{L^{p}[0,1]}.
$$
By definition $\partial_te^{it\sqrt{-\Delta}}f=i\sqrt{-\Delta}
e^{it\sqrt{-\Delta}}f$, so that taking $F(t)=e^{it\sqrt{-\Delta}} f$ and then integrating with respect to $d\mu(x)$, 
\begin{multline*}\label{nl}
\big\|\sup_{t\in[0,1]}|e^{it\sqrt{-\Delta}} f|\,\big\|^p_{L^p(d\mu)}-\|e^{it\sqrt{-\Delta}} f\|^p_{L^p(d\mu\times[0,1])}\\
\le
p\|e^{it\sqrt{-\Delta}} f\|_{L_x^p(d\mu\times[0,1])}^{p-1}\|\sqrt{-\Delta}
e^{it\sqrt{-\Delta}} f\|_{L^p(d\mu\times[0,1])},
\end{multline*}
again by H\"older's inequality. The proof is completed by dyadically decomposing the Fourier transform of $f$, applying the Strichartz inequality \eqref{strichartz} to each piece, and then summing a geometric series.
\end{proof}

Now the maximal estimates are stronger than \eqref{itt} with $d\nu(x,t)=d\mu(x)d\delta_0(t)$ and so we can recover a version of the necessary condition just proved, losing $1/p$ derivatives.
On the other hand, 
$s\ge \frac{n+2-\alpha}{4}$
is also necessary for the maximal estimate. Although the example of \cite[pp. 15-16]{CHL} does not involve a measure of the form $d\nu(x,t)=d\mu(x)d\delta_0(t)$, by projecting their measure orthogonally to the time axis and normalising appropriately, it is easy to adjust the argument to see that the same condition is necessary for the maximal estimate~\eqref{maximal}. The point is that the gain of insisting on a product structure in the measure is offset  by taking the supremum. By Lemma~\ref{ittt},  the corresponding necessary condition for our more regular Strichartz estimate~\eqref{strichartz} is then
\begin{equation}\label{yes}
s\ge \max \Big\{ \frac{n-\alpha}{2}, \frac{n+2-\alpha}{4}\Big\}-\frac{1}{p}.
\end{equation}
As before, this can be strengthened for larger $p$ via the Knapp example however here  we are only interested in $p\le 2$. 
Now Falconer's example and Theorem~\ref{wells} tell us that $s\ge \frac{n-2}{4}$ when $\alpha< n/2$ and this is larger than the right-hand side of \eqref{yes} in the range  $$\max\Big\{\frac{n+2}{2}-\frac{2}{p}, 4-\frac{4}{p}\Big\}<\alpha<\frac{n}{2}.$$ So this is a new necessary condition when $n=2$ and $p<4/3$, when $n=3$ and $p<8/5$, or when $n\ge 4$ and $p<2$. This reflects that solutions to the wave equation can be large, at many different times,  on small sets with many equidistant points.

\section{Taking $p=2$}\label{five}

Falconer's example suggests that more than half a derivative of smoothing may not be possible, at least near the critical dimension $\alpha=n/2$.  Thus, consideration of small integration exponents may not smooth the solution further and so we concentrate now on the case $p=2$ and hope to prove estimates of the form
\begin{equation}\label{con2}
\Big(\int_0^1\int |e^{it\sqrt{-\Delta}}f(x)|^2  d\mu(x)dt\Big)^{1/2} \lesssim [\mu]^{1/2}_{\alpha}\|f\|_{H^{s}(\R^n)}.
\end{equation}
We will see that this estimate holds, for all $s>\frac{n-\alpha-1}{2}$, in the range $0<\alpha\le \frac{n-1}{2}$. For a proof in the larger range $0<\alpha<n-1$ with $\mu(x)=|x|^{\alpha-n}$, see \cite{OR}.  

By Lemma~\ref{ittt} of the previous section, if \eqref{con2} held for all $s>\frac{n-\alpha-1}{2}$, then we would be able to  strengthen the fixed-time estimate~\eqref{ftt} to the maximal estimate
$$
\Big(\int \sup_{0<t<1}|e^{it\sqrt{-\Delta}}f(x)|^2  d\mu(x)\Big)^{1/2} \lesssim [\mu]^{1/2}_{\alpha}\|f\|_{H^{s}(\R^n)},\quad s>\frac{n-\alpha}{2},
$$
thus  improving the estimates of \cite{BBCR, LR}. This in turn yields bounds for the divergence sets;
$$
\dim\Big\{ x\in \mathbb{R}^n\, :\, \lim_{t\to 0} e^{it\sqrt{-\Delta}}f(x)\not= f(x) \Big\}\le \alpha,\ \ \forall\ f\in H^s(\R^n),
$$
whenever $s>(n-\alpha)/2$; see \cite[Appendix B]{BR}. The fixed-time estimate is almost sharp with respect the regularity and so we cannot expect more than half a derivative of smoothing in \eqref{con2}. 

A similar result to Theorem~\ref{wells} was proven by Mattila \cite{M, M2} for his  exponents $\beta_n(\alpha)$ defined to be the supremum of $\beta\ge 0$ for which\begin{equation}\label{dz}
\|\widehat{\mu}(R\,\cdot\,)\|^2_{L^2(\mathbb{S}^{n-1})}\lesssim
\|\mu\|[\mu]_{\alpha}R^{-\beta}\end{equation} whenever 
$R> 1$ and $[\mu]_{\alpha}<\infty$. He proved that if $\beta_n(\alpha)\ge n-\alpha$, then 
$$
\mathrm{dim} (E) >\alpha\quad \Rightarrow \quad |\{|x-y| : x,y\in E\}|>0.
$$
In the following proposition, we see that a resolution of Falconer's conjecture would pass through the new approach at least as easily.

\begin{proposition}\label{well} Let $0<\alpha\le n$. Then
$$\gamma_n(\alpha)\ge \frac{\beta_n(\alpha)+1-\alpha}{2}.$$
\end{proposition}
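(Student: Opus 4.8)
The plan is to relate the Strichartz quantity $\gamma_n(\alpha)$ to Mattila's spherical-average exponent $\beta_n(\alpha)$ by transferring the $L^2$-decay of $\widehat{\mu}$ on spheres into a spacetime $L^2$-estimate for the wave propagator, then feeding this into the $p=2$ case of Definition~\ref{demo}. Since the proposition concerns $p=2$ (as in Section~\ref{five}), it suffices to prove
$$
\Big(\int_0^1\int |u(x,t)|^2\,d\mu(x)\,dt\Big)^{1/2} \lesssim [\mu]^{1/2}_{\alpha}\,\|u_0\|_{H^{\frac{n-\alpha}{2}-\gamma}}
$$
for every $\gamma<\frac{\beta_n(\alpha)+1-\alpha}{2}$, equivalently $s:=\frac{n-\alpha}{2}-\gamma>\frac{n-\alpha-1}{2}-\frac{\beta_n(\alpha)}{2}$; by density it is enough to treat $u_0$ with $\widehat{u_0}$ supported in a dyadic annulus $|\xi|\sim 2^k$ and sum, so the real target is the single-frequency bound with a gain of $2^{-\gamma k}$ over the fixed-time estimate \eqref{ftt}.

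First I would write, on the frequency annulus $|\xi|\sim 2^k$,
$$
u(x,t)=\int_{\R^n} e^{2\pi i x\cdot\xi}\,\tfrac12\big(e^{2\pi i t|\xi|}+ c\,e^{-2\pi i t|\xi|}\big)\,\widehat{u_0}(\xi)\,d\xi
$$
(the cosine with the fixed phase shift expands into the two half-wave evolutions up to constants), and expand $|u(x,t)|^2$ and integrate $d\mu(x)$, producing factors of the form $\widehat{\mu}\big((\xi-\eta)+t(|\xi|-|\eta|)e_{??}\big)$ — more precisely, after integrating in $x$ we get $\widehat{\mu}(\xi-\eta)$ times oscillation $e^{2\pi i t(|\xi|\pm|\eta|)}$, and then integrating $dt$ over $[0,1]$ gives a kernel bounded by $\min\{1,|\,|\xi|\pm|\eta|\,|^{-1}\}$. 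Passing to polar coordinates $\xi=\rho\omega$, $\eta=r\theta$ with $\rho,r\sim 2^k$, the diagonal term ($+$ sign contributions and the part of the $-$ term with $\rho\approx r$) is controlled by $\int_{\rho\sim r\sim 2^k}\!\!\int_{\Sn}\!\!\int_{\Sn}|\widehat\mu(\rho\omega-r\theta)|\,|F(\rho\omega)||F(r\theta)|\,d\omega\,d\theta\,\dots$, and one estimates the inner double integral over the spheres using \eqref{dz} at scale $R\sim 2^k$, which supplies exactly the factor $[\mu]_\alpha 2^{-\beta_n(\alpha)k}$ (up to $\epsilon$). Comparing with the trivial fixed-time bound, which at frequency $2^k$ costs $2^{(n-\alpha)k/2}$ against $\|u_0\|_{L^2}$ restricted to the annulus — wait, more carefully: the fixed-time estimate \eqref{ftt} at a single frequency gives $2^{sk}\gtrsim 2^{\frac{n-\alpha}{2}k}$, and the spherical-average computation replaces one power of the ``$2^{(n-\alpha)k}$'' loss (coming from $\|\mu_k\|_\infty$-type bounds) by $2^{(n-\alpha)k}2^{-\beta_n(\alpha)k}$ over a unit-length $\rho$-interval; carrying out the $\rho,r$ integration over annuli of width $2^k$ and the $t$-integration, and then interpolating with the $L^\infty_\omega$ information on the sphere, should yield the gain $\gamma = \frac{\beta_n(\alpha)+1-\alpha}{2}$, the extra $\frac{1-\alpha}{2}$ reflecting the one-dimensional $dt$-integration (the $\min\{1,|\cdot|^{-1}\}$ kernel) against the $\rho$-density.

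The main obstacle is the off-diagonal term where $\rho$ and $r$ are comparable but $|\xi|-|\eta|$ is not tiny, i.e. the region where the $dt$-kernel $\min\{1,|\,|\xi|-|\eta|\,|^{-1}\}$ is genuinely decaying but $\widehat\mu(\xi-\eta)$ need not be small: there one must not waste the $t$-decay, and the correct bookkeeping is to dyadically decompose in $|\,|\xi|-|\eta|\,|\sim 2^{-j}$ (or $2^{-j}\sim$ the distance of $\eta$ to the sphere of radius $|\xi|$), apply \eqref{dz} on each sphere of radius $\rho$ and integrate $\rho$ over an interval of length $\sim 2^{-j}$, then sum in $j$ — the sum converges precisely because the $\rho$-interval shrinks like $2^{-j}$ while the kernel is $2^{j}$, giving a logarithmic or bounded total loss that is absorbed into the $\epsilon$ in $s>\frac{n-\alpha-1}{2}-\frac{\beta_n(\alpha)}{2}$. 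One must also check that the ``remainder'' half-wave cross terms ($e^{2\pi i t(|\xi|+|\eta|)}$ with $|\xi|+|\eta|\sim 2^k$) are harmless: there the $dt$-kernel is $O(2^{-k})$, an enormous gain, and they are controlled crudely by $\|\widehat\mu\|_\infty\le\|\mu\|$ and Cauchy–Schwarz. Finally, evenness and realness of $u_0$ and $\mu$ are used to identify $\widehat\mu(\xi)=\int\cos(2\pi x\cdot\xi)\,d\mu$ with the symbol appearing above and to legitimise working with $\cos(t\sqrt{-\Delta}-\tfrac{n-1}{4}\pi)$; this is exactly the hypothesis in Definition~\ref{demo}, so no new normalisation issues arise.
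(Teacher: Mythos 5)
Your approach and the paper's both aim to reduce the spacetime $L^2$-integral to a spherical-average quantity that Mattila's decay estimate controls, but the reductions are genuinely different and yours has two gaps. The paper's proof is short: drop the integration limit from $[0,1]$ to all of $\R$ (nonnegativity), dominate $|\cos(t\sqrt{-\Delta}-\tfrac{n-1}{4}\pi)u_0|$ pointwise by $|e^{it\sqrt{-\Delta}}u_0|$ (valid for real, even $u_0$), and apply Plancherel in $t$. The $t$-integral over $\R$ then produces a Dirac mass pinning $|\xi|=|\eta|$, so the whole expression collapses to
\begin{equation*}
\int_0^\infty\int \big|\big(\widehat{u_0}(r\cdot)\sigma\big)^\vee(rx)\big|^2 d\mu(x)\, r^{2(n-1)}\,dr,
\end{equation*}
to which the dual form of Mattila's estimate, $\|(g\sigma)^\vee(R\cdot)\|^2_{L^2(d\mu)}\lesssim R^{-\beta}[\mu]_\alpha\|g\|^2_{L^2(\Sn)}$ (see~\cite{BBCR}), applies radius by radius; polar coordinates then read off the Sobolev exponent. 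There is no off-diagonal, no dyadic $t$-kernel, no decomposition in $||\xi|-|\eta||$.

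Two concrete problems with your route. First, you place absolute values on $\widehat\mu(\rho\omega-r\theta)$ inside the double spherical integral and then claim \eqref{dz} ``supplies the factor $2^{-\beta k}$'' at scale $R\sim 2^k$. But \eqref{dz} bounds $\int_{\Sn}|\widehat\mu(R\omega)|^2\,d\omega$, with $\widehat\mu$ sampled on a sphere centred at the origin; the point $\rho\omega-r\theta$ does not trace such a sphere as $(\omega,\theta)$ vary, and there is no translated-sphere version of \eqref{dz} since $e^{2\pi i c\cdot x}\mu$ is not a positive measure. What the argument actually requires is the bilinear dual estimate
\begin{equation*}
\int\!\!\int_{\Sn\times\Sn}\widehat\mu\big(R(\theta-\omega)\big)\,g(\omega)\,\overline{g(\theta)}\,d\omega\,d\theta\ \lesssim\ R^{-\beta}[\mu]_\alpha\|g\|^2_{L^2(\Sn)},
\end{equation*}
without absolute values inside, applied with $g=\widehat{u_0}(R\cdot)$; this is precisely the estimate the paper imports from~\cite{BBCR}, and it only matches your integrand when $\rho=r$ exactly. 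Second, once you see this, your near-diagonal/off-diagonal bookkeeping (dyadic decomposition in $||\xi|-|\eta||\sim 2^{-j}$ played against the $t$-kernel and the $\rho$-density) becomes the entire burden of the proof, and as written it is too vague to audit. Both difficulties disappear at once if you extend the $t$-integral to all of $\R$ and apply Plancherel, as the paper does; I would adopt that step.
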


\begin{proof}
The main ingredient is polar coordinates and Plancherel's identity. Indeed, taking $p=2$ and noting that
\begin{equation*}\label{told}
\big|\cos\big(t\sqrt{-\Delta}-\tfrac{n-1}{4}\pi\big)\big|^2\le \big|e^{i(t\sqrt{-\Delta}-\frac{n-1}{4}\pi)}\big|^2,
\end{equation*}
by Plancherel's identity in the time variable (integrating over the whole of $\R$ at this point), we obtain
\begin{align*}
\int\!\!\int \big|\cos\big(t\sqrt{-\Delta}-\tfrac{n-1}{4}\pi\big) f(x)\big|^2  d\mu(x)dt\lesssim \int_0^\infty\int \big|\big( \widehat{f}(r\cdot) \sigma\big)^\vee(rx)\big|^2d\mu(x)r^{2(n-1)}dr.
\end{align*}
Now, writing  \eqref{dz} in a dual form (see \cite[pp. 610]{BBCR} for the details), we have 
\begin{equation*}
\|(g\sigma)^\vee (R \, \cdot \,) \|^2_{L^{2}(d \mu)}
\lesssim 
R^{-\beta} [\mu]_{\alpha} \| g\|^2_{L^{2}(\mathbb{S}^{n-1})}.
\end{equation*}
for all $\beta<\beta_n(\alpha)$.
Thus we see that
\begin{align*}
\int\!\!\int \big|\cos\big(t\sqrt{-\Delta}-\tfrac{n-1}{4}\pi\big) f(x)\big|^2  d\mu(x)dt&\lesssim  [\mu]_{\alpha}\int_0^\infty \| \widehat{f}(r\cdot)\|^2_{L^{2}(\mathbb{S}^{n-1})}r^{2(n-1)}r^{-\beta}dr\\
&\lesssim [\mu]_{\alpha}\|f\|^2_{H^{\frac{n-1-\beta}{2}}},
\end{align*}
completing the proof.
\end{proof}

Combining with the best known lower bounds for $\beta_n$,  we obtain the following lower bounds for the local smoothing:
$$
\gamma_n(\alpha)\ge\left\lbrace
\begin{array}{lll}
\frac{1}{2},& \alpha\in(0,\frac{n-1}{2}],& \\
&& \text{(Mattila~\cite{M})}\\
\frac{1}{2}-\frac{2\alpha-n+1}{4},& \alpha\in[\frac{n-1}{2},\frac{n}{2}],& \\
&&\\
\frac{1}{4}-\frac{2\alpha-n}{8},& \alpha\in[\frac{n}{2},\frac{n}{2}+1],&  \text{(Erdo\u{g}an/Wolff~\cite{E3,W})}\\
&&\\
\frac{(n-\alpha)^2}{2(n-1)(2n-\alpha-1)}, &\alpha\in[\frac{n}{2},n],&\text{(Luc\`a--Rogers~\cite{LR}).}\end{array}
\right.
$$
From Mattila's bound we deduce that \eqref{con2} is true, for all $s>\frac{n-\alpha-1}{2}$, in the range $0<\alpha\le \frac{n-1}{2}$. Combining the bounds of Wolff and Erdo\u{g}an with Proposition \ref{wells}, we \lq recover' the best known result for Falconer's problem. The bound of Luc\`a and the author reveals that there is smoothing as soon as $\alpha<n$.


\section{Falconer's problem via null-form estimates}

Solutions $u$ of the wave equation, $\big(\partial_{tt}-\Delta\big)u =0$,  satisfy
\begin{equation}\label{null}
\big(\partial_{tt}-\Delta\big) |u|^2=2\big(|\partial_t u|^2-|\nabla u|^2\big).
\end{equation}
Null-form estimates attempt to take advantage of the cancelation on the right-hand side; see for example \cite{FK,  LRV, LV}. In other words, or from a different point of view, they lend themselves to attack via bilinear estimates \cite{T2, Te, W2}. 
 
\begin{definition}\label{def2} Writing $u(x,t):= e^{it\sqrt{-\Delta}} u_0(x)$, we let $\gamma_n^\star(\alpha)$ denote the supremum of the $\gamma$ such that
$$
\int_0^1\int_{\R^n} \Big( |\partial_t u(x,t)|^2-|\nabla u(x,t)|^2\Big) (-\Delta)^{-1}\mu(x)\,dxdt \lesssim [\mu]_{\alpha}\|u_0\|^2_{H^{\frac{n-\alpha}{2}-\gamma}(\R^n)}.
$$
\end{definition}

If it were not for the positive weight $(-\Delta)^{-1}\mu$, which is bounded when $\alpha>n-2$, the left-hand side of the inequality would be identically zero, by Plancherel's identity. Note also that we only need to bound the integral rather than the modulus of the integral. Finally note that the inequality can be rewritten in terms of the Riesz transform whereupon it becomes clear that the integrand can be negative.


\begin{proposition} Let $0<\alpha\le n-2$. Then 
$$
\gamma_n(\alpha)\ge \min\{\gamma_n^\star(\alpha),1/2\}. 
$$
\end{proposition}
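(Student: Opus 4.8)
The plan is to relate the quantity controlling $\gamma_n(\alpha)$, which involves $|\cos(t\sqrt{-\Delta}-\tfrac{n-1}{4}\pi)u_0|^2$ integrated against $d\mu(x)\,dt$, to the null-form quantity in Definition~\ref{def2}, which integrates the cancellation term $|\partial_t u|^2-|\nabla u|^2$ against the smoothed weight $(-\Delta)^{-1}\mu$. The point of departure is the identity \eqref{null}: if $u$ solves the wave equation then $(\partial_{tt}-\Delta)|u|^2 = 2(|\partial_t u|^2-|\nabla u|^2)$. Writing $v := |u|^2$, one inverts this by solving a (forced) wave equation, or more directly, one pairs \eqref{null} against $(-\Delta)^{-1}\mu$ and integrates by parts in space to move $\Delta$ onto the weight; since $(-\Delta)(-\Delta)^{-1}\mu = \mu$, the $-\Delta$ part of the left side produces exactly $\int |u|^2\,d\mu$ up to the $\partial_{tt}$ contribution. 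So the scheme is: integrate the null-form identity against $(-\Delta)^{-1}\mu$ over $\R^n\times[0,1]$, integrate by parts, and solve for $\int_0^1\int|u|^2\,d\mu\,dt$ in terms of (i) the null-form integral, which is controlled by $[\mu]_\alpha\|u_0\|^2_{H^{(n-\alpha)/2-\gamma_n^\star(\alpha)}}$ by definition, and (ii) boundary-in-time terms coming from $\partial_{tt}v$.

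First I would make the reduction to $u(x,t)=e^{it\sqrt{-\Delta}}u_0(x)$ rather than the cosine solution, as in Definition~\ref{def2} and the remainder-term analysis in the proof of Theorem~\ref{wells}; the cosine is a combination of $e^{\pm it\sqrt{-\Delta}}$ and the cross terms are handled the same way (or absorbed as in that proof), so it suffices to bound $\int_0^1\int |e^{it\sqrt{-\Delta}}u_0|^2\,d\mu\,dt$. Second, I would perform the space integration by parts:
$$
\int_0^1\!\!\int_{\R^n} \big(\partial_{tt}-\Delta\big)|u|^2\,(-\Delta)^{-1}\mu\,dx\,dt
= \int_0^1\!\!\int_{\R^n} \partial_{tt}|u|^2\,(-\Delta)^{-1}\mu\,dx\,dt + \int_0^1\!\!\int_{\R^n}|u|^2\,d\mu\,dt,
$$
valid since $(-\Delta)^{-1}\mu$ is a bounded function when $\alpha>n-2$ (and one should note that the hypothesis $\alpha\le n-2$ in the statement means we are in the complementary regime — so in fact a further truncation of $\mu$ to frequencies $\lesssim 2^k$ or a regularisation $(-\Delta+\varepsilon)^{-1}$ may be needed to make the weight bounded, matching the dyadic decomposition $\mu=\sum\mu_k$; the power losses from this truncation are what give the $2^{sk}$ factor one can afford). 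Third, the $\partial_{tt}$ term: integrate by parts in $t$ over $[0,1]$ to write it as boundary terms $\big[\partial_t|u|^2 (-\Delta)^{-1}\mu\big]_0^1$ minus $\int_0^1\int \partial_t|u|^2\,\partial_t(-\Delta)^{-1}\mu$; since the weight is $t$-independent the latter integral telescopes to another boundary term, so everything reduces to $t\in\{0,1\}$ fixed-time quantities involving $|u|^2$ and $\partial_t|u|^2 = 2\,\mathrm{Re}(\bar u\,\partial_t u)$. These fixed-time terms are controlled by the fixed-time estimate \eqref{ftt} (equivalently Lemma~\ref{ft}): $\int |e^{it\sqrt{-\Delta}}u_0|^2\,d\mu\lesssim [\mu]_\alpha \|u_0\|^2_{H^{s}}$ for $s>(n-\alpha)/2$, and similarly for $\partial_t u = i\sqrt{-\Delta}u$, which costs one extra derivative. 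Combining Cauchy–Schwarz on the $\mathrm{Re}(\bar u\,\partial_t u)$ cross term, the boundary contribution is bounded by $[\mu]_\alpha\|u_0\|^2_{H^{s}}$ with $s$ slightly above $(n-\alpha)/2$, i.e.\ it contributes the trivial bound $\gamma\ge 0$ but with the full strength — so after rearranging, $\int_0^1\int|u|^2\,d\mu\,dt$ is bounded by the max of the null-form contribution (giving $\gamma_n^\star(\alpha)$) and the boundary contribution (giving $1/2$, since the fixed-time estimate is sharp up to $(n-\alpha)/2$ and no further smoothing is claimed there — this is where the $\min\{\cdot,1/2\}$ comes from), which yields $\gamma_n(\alpha)\ge\min\{\gamma_n^\star(\alpha),1/2\}$.

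The main obstacle I expect is handling the weight $(-\Delta)^{-1}\mu$ cleanly in the regime $\alpha\le n-2$, where it need not be bounded: one must regularise, either by the dyadic pieces $\mu_k$ with their known $L^\infty$ bound \eqref{bound} (so that $(-\Delta)^{-1}\mu_k$ is controlled, picking up a negative power of $2^k$ from the two derivatives against a dyadic-frequency-localised object, which one then sums), or by replacing $(-\Delta)^{-1}$ with $(1-\Delta)^{-1}$ and tracking the error. Keeping the $H^{(n-\alpha)/2-\gamma}$ regularity exactly right through this regularisation — so that no spurious derivative loss degrades $\gamma_n^\star(\alpha)$ below what Definition~\ref{def2} grants — is the delicate bookkeeping step. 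A secondary, more routine, point is justifying the integrations by parts (decay at spatial infinity), which follows from the compact support of $\mu$ together with finite speed of propagation placing $u(\cdot,t)$'s relevant mass in a bounded region for $t\in[0,1]$, so the boundary-at-infinity terms vanish; the finite-speed-of-propagation observation also explains why only the fixed times $t=0,1$ enter and why the spatial support stays controlled by $\diam(\supp\mu)$, the unimportant parameter hidden in $\lesssim$.
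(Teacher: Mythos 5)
Your overall scheme matches the paper's: pair the identity \eqref{null} against $(-\Delta)^{-1}\mu$, integrate by parts in $x$ (using Parseval/$-\Delta(-\Delta)^{-1}\mu=\mu$) to produce $\int_0^1\!\int|u|^2\,d\mu\,dt$, recognise the null-form integral as giving $\gamma_n^\star(\alpha)$, and reduce the $\partial_{tt}|u|^2$ term to boundary terms at $t=0,1$. So far so good. The genuine gap is in how you close out that boundary term, and this is precisely where the factor $1/2$ in the statement has to come from.

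You claim the boundary term $\int \partial_t|u(\cdot,t_0)|^2\,(-\Delta)^{-1}\mu\,dx$ is ``controlled by the fixed-time estimate \eqref{ftt}'', and then assert that it ``contributes the trivial bound $\gamma\ge 0$ but with the full strength'' and therefore ``gives $1/2$''. This does not hold together. First, the fixed-time estimate \eqref{ftt} is an integral against the measure $d\mu$, whereas the boundary term is an integral against the weight $(-\Delta)^{-1}\mu\,dx$, which for $\alpha\le n-2$ is not a bounded multiple of anything you can immediately control; your suggestion of truncating to $\mu_k$ or regularising $(-\Delta+\varepsilon)^{-1}$ is not carried out and the promised gain in $2^{-k}$ is never exhibited. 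Second, even if \eqref{ftt} applied, it only yields regularity $s>(n-\alpha)/2$, i.e.\ $\gamma=0$, not $\gamma=1/2$; your own sentence concedes this and then contradicts it by asserting $1/2$ in the next clause. With the argument as written you would only reach $\gamma_n(\alpha)\ge\min\{\gamma_n^\star(\alpha),0\}=0$, which is the trivial estimate.

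What you are missing is the mechanism that actually produces the half-derivative gain. The paper does not bound $(-\Delta)^{-1}\mu$ directly; it uses Parseval to move derivatives so that the weight appears as $(-\Delta)^{-\frac{n-\alpha'}{2}}\mu$, which the Frostman condition makes pointwise bounded by $[\mu]_\alpha$ (a standard Riesz-potential estimate, valid for all $0<\alpha'<\alpha$, not only $\alpha>n-2$). This shifts $\frac{n-\alpha'}{2}-1$ fractional derivatives onto the bilinear expression $\partial_t|u|^2=i\sqrt{-\Delta}u\cdot\overline{u}-iu\cdot\overline{\sqrt{-\Delta}u}$, and the point is that the fractional Leibniz rule applied to $\sqrt{-\Delta}u\cdot\overline{u}$ places all the derivatives (including the lone $\sqrt{-\Delta}$) on \emph{one} factor while the other sits in $L^2$; summing a dyadic decomposition gives \eqref{ret} with any $\gamma<1/2$. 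It is the asymmetry of this bilinear form under Leibniz — $2s+1$ derivatives on one factor, none on the other, compared with $2s'$ split over two factors in the $H^{s'}$ norm — that accounts for the $1/2$. Without this step, your proof does not reach the stated conclusion.
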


\begin{proof}
Using Parseval's identity and \eqref{null}, solutions of the wave equation satisfy
\begin{align*}
\int|u(x,t)|^2  d\mu(x) &= -\int\Delta |u(x,t)|^2  (-\Delta)^{-1}\!\mu(x)\,dx\\
&= \int \Big(2|\partial_t u(x,t)|^2-2|\nabla u(x,t)|^2-\partial_{tt}|u(x,t)|^2\Big) (-\Delta)^{-1}\!\mu(x)\,dx.
\end{align*}
After integrating this identity in time, it will remains to  adequately bound the third term on the right-hand side.  To this end, we apply Fubini's theorem and the Fundamental Theorem of Calculus, so that
\begin{align*}
& \ \Big|\int_0^1\int \partial_{tt}|u(x,t)|^2 (-\Delta)^{-1}\mu(x)\,dxdt\Big|
\\
 =&\  \Big|\int \Big(\partial_{t}|u(x,1)|^2-\partial_{t}|u(x,0)|^2\Big) (-\Delta)^{-1}\mu(x)\, dx\Big|\\
\lesssim &\ [\mu]_{\alpha}\Big|\int (-\Delta)^{\frac{n-\alpha'}{2}-1}\Big(\partial_{t}|u(x,1)|^2-\partial_{t}|u(x,0)|^2\Big)\,dx\Big|,
\end{align*}
for all $\alpha'<\alpha$. In the final inequality we use Parseval's identity and the fact that $(-\Delta)^{-\frac{n-\alpha'}{2}}\mu\lesssim [\mu]_{\alpha}$.  This is clear when $|x|>3$, as $(-\Delta)^{-\frac{n-\alpha'}{2}}\mu(x)\le \|\mu\|$. Otherwise, we have that
\begin{align*}
(-\Delta)^{-\frac{n-\alpha'}{2}}\mu(x)
\lesssim\ \int_{\R^n} \frac{d\mu(y)}{|x-y|^{\alpha'}}
\le  \sum_{j\ge-3} [\mu]_{\alpha}  2^{-j\alpha}2^{j\alpha'}
\lesssim  [\mu]_{\alpha},
\end{align*}
by a suitable dyadic decomposition.

Now given that 
$$
\partial_t|u(x,t)|^2=i\sqrt{-\Delta}u(x,t)\overline{u(x,t)}-iu(x,t)\overline{\sqrt{-\Delta}u(x,t)}, 
$$
in order to discard this term it would suffice to prove the fixed-time estimate
\begin{equation}\label{ret}
\int\Big|(-\Delta)^{\frac{n-\alpha'}{2}-1}\Big(\sqrt{-\Delta}u(x,t)\overline{u(x,t)}\,\Big)\,\Big|\,dx \lesssim \|u_0\|^2_{H^{\frac{n-\alpha'}{2}-\gamma}(\R^n)}
\end{equation}
for all $\gamma<1/2$.
 For this we applying the fractional Leibniz inequality 
$$
\|(-\Delta)^{s}(gh)\|_{1}\le \|(-\Delta)^sg\|_2\|h\|_2+\|g\|_2\|(-\Delta)^sh\|_2,\quad s\ge 0;
$$
see for example \cite{KPV}. We then partition the Fourier transform of $u_0$ into dyadic pieces as in \eqref{fourier} and sum a geometric series, using the fact that $\|u(x,t)\|_2=\|u_0\|_2$, to complete the proof.
\end{proof}

\section*{Appendix A: A generalised trace theorem}

Using Plancherel's identity, the fixed-time estimates are a consequence of the following lemma. Taking $p=2$ and $\mu$ to be surface measure, so that $\alpha=n-1$,  we see that this is a generalisation of the trace theorem. On the other hand, taking $\mu$ to be the Dirac delta, so that $\alpha=0$, the norm on the left-hand side resembles the supremum norm, in which case this can also be compared to  Sobolev embedding.
\begin{lemma}\label{ft} Let $1\le p\le 2$ and $s>\frac{n-\alpha}{2}$. Then
$$
\|f\|_{L^p(d\mu)}\lesssim \|\mu\|^{1/p-1/2}[\mu]^{1/2}_\alpha\|f\|_{H^{s}(\R^n)}.
$$
\end{lemma}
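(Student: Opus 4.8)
The plan is to reduce the whole statement to the endpoint $p=2$, where it reads $\|f\|_{L^2(d\mu)} \lesssim [\mu]^{1/2}_\alpha\|f\|_{H^s}$, and then recover $1\le p<2$ by H\"older's inequality and interpolation. Since $\mu$ is finite, $\|f\|_{L^1(d\mu)}\le \|\mu\|^{1/2}\|f\|_{L^2(d\mu)}$, which yields the claimed inequality at $p=1$ once the $p=2$ case is in hand; for intermediate $p$ one interpolates $L^p(d\mu)$ between $L^1(d\mu)$ and $L^2(d\mu)$ with $\theta=2-2/p$, and checks that the exponent of $\|\mu\|$ collapses to exactly $1/p-1/2$ while the exponents of $[\mu]_\alpha$ and $\|f\|_{H^s}$ remain $1/2$ and $1$ respectively.

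For the endpoint, since $H^s(\R^n)\hookrightarrow H^{s'}(\R^n)$ for $s\ge s'$ I may assume $(n-\alpha)/2<s<n$. Write $f=G_s\ast g$, where $G_s$ is the Bessel potential kernel with $\widehat{G_s}(\xi)=(1+4\pi^2|\xi|^2)^{-s/2}$ and $\|g\|_2=\|f\|_{H^s}$, so that the claim is precisely an $L^2(dx)\to L^2(d\mu)$ bound for $Tg:=G_s\ast g$. I estimate $\|T\|^2=\|TT^*\|$: using that $G_s$ is real and even and $G_s\ast G_s=G_{2s}$, the operator $TT^*$ acts on $L^2(d\mu)$ by the positive, symmetric kernel $(x,y)\mapsto G_{2s}(x-y)$.

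By Schur's test it then suffices to bound $\sup_x\int G_{2s}(x-y)\,d\mu(y)$ by a constant multiple of $[\mu]_\alpha$. The contribution of $|x-y|\ge 1$ is $\lesssim\|\mu\|$ by the exponential decay of $G_{2s}$ at infinity, and $\|\mu\|\lesssim[\mu]_\alpha$ because the support has bounded diameter. For $|x-y|<1$ I use $G_{2s}(z)\lesssim|z|^{2s-n}$ and decompose dyadically:
$$
\int_{|x-y|<1} G_{2s}(x-y)\,d\mu(y) \lesssim \sum_{j\ge 0} 2^{-j(2s-n)}\mu(B(x,2^{-j})) \le [\mu]_\alpha \sum_{j\ge 0} 2^{j(n-2s-\alpha)},
$$
and this geometric series converges precisely because $s>(n-\alpha)/2$. (If $2s\ge n$ the kernel $G_{2s}$ is bounded, or at worst logarithmically singular, and the bound is only easier.) Hence $\|TT^*\|_{L^2(d\mu)\to L^2(d\mu)}\lesssim[\mu]_\alpha$, which gives the endpoint.

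The only genuinely delicate point is the exponent bookkeeping: confirming that the geometric series is summable exactly at the threshold $s>(n-\alpha)/2$, so that the regularity is not wasted, and tracking that it is the $|x-y|<1$ piece that carries the factor $[\mu]_\alpha$ while the tail contributes only $\|\mu\|$, which is then absorbed using the bounded diameter. The $TT^*$ identity, Schur's test, and the passage from general $p$ back to $p=2$ are otherwise routine.
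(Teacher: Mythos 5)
Your proof is correct, but it takes a genuinely different route from the one in the paper. The paper works on the frequency side: it performs a Littlewood--Paley decomposition of $f$, notes via Parseval that for $\widehat{f}$ supported in $B(0,R)$ one has $\|f\|^2_{L^2(d\mu)}=\int|f|^2\mu_R$ with $\mu_R=R^n\psi(R\,\cdot)\ast\mu$ a smoothed version of the measure, and then derives the pointwise bound $\mu_R\lesssim[\mu]_\alpha R^{n-\alpha}$ from the Frostman condition before summing a geometric series in $R$. You instead work entirely on the physical side: you realise the Bessel potential $H^s\hookrightarrow L^2(d\mu)$ as the operator $Tg=G_s\ast g$, pass to $TT^*$ whose kernel on $L^2(d\mu)$ is $G_{2s}(x-y)$, and bound $\sup_x\int G_{2s}(x-y)\,d\mu(y)\lesssim[\mu]_\alpha$ by Schur's test, splitting into the exponential tail (absorbed by $\|\mu\|\lesssim[\mu]_\alpha$ via bounded diameter) and a dyadic sum near the diagonal where $G_{2s}(z)\lesssim|z|^{2s-n}$; the threshold $s>(n-\alpha)/2$ is exactly what makes that series converge. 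Your argument is essentially the classical Riesz energy/Frostman potential computation dressed in $TT^*$ form, and it avoids any frequency localisation of $\mu$ itself; the paper's argument is a Fourier-side mollification bound. Both are standard and tight at the same exponent, and each has the usual advantage of its dual: the kernel argument is slightly more elementary and makes the role of the $\alpha$-dimensional growth transparent as a potential-theoretic statement, while the paper's Littlewood--Paley decomposition is the one that generalises directly to the space-time Strichartz estimates treated later. The reduction to $p=2$ is the same in spirit; note that a single application of H\"older, $\|f\|_{L^p(d\mu)}\le\|\mu\|^{1/p-1/2}\|f\|_{L^2(d\mu)}$, gives the factor $\|\mu\|^{1/p-1/2}$ immediately without passing through $p=1$ and interpolating, though your bookkeeping also checks out.
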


\begin{proof} By H\"older's inequality, it will suffice to prove the inequality with $p=2$. Note that the Fourier transform of $|f|^2=f\overline{f}$ is supported in $B(0,2R)$ whenever the Fourier transform of $f$ is supported in $B(0,R)$. Thus, by Parseval's identity, 
\begin{equation}\label{dryer}
\|f\|_{L^2(d\mu)}=\Big(\int |f(x)|^2 \mu_R(x) dx\Big)^{1/2},
\end{equation}
whenever $\widehat{f}\subset \supp B(0,R)$, where
$
\mu_R=R^n\psi(R\,\cdot)\ast \mu.
$
Here $\psi$ is the Fourier transform of $\phi(|\cdot|)$ with $\phi$ smooth, equal to one on $[-2,2]$ and supported in $[-4,4]$. Now
$$
\psi(R\,\cdot\,) \lesssim  2^{-nk}\sum_{k\ge 0} \mathbf{1}_{B(0,R^{-1}2^{k})},
$$
and 
$$
\mathbf{1}_{B(0,R^{-1}2^{k})}\ast \mu(x)=\mu\big(B(x,R^{-1}2^{k})\big)\le [\mu]_\alpha R^{-\alpha}2^{\alpha k}
$$
so that  we can sum to obtain  
$
\mu_R\lesssim [\mu]_\alpha R^{n-\alpha}.
$
Plugging this into \eqref{dryer}, and summing another geometric series completes the proof.
\end{proof}

\section*{Appendix B: Strichartz estimates from decay estimates}

Letting $\Gamma$ denote the surface measure on the cone $\{ (\xi,|\xi|)\in \mathbb{R}^{n+1}\, : 1\le |\xi|< 2\}$ and $0<\alpha\le(n-1)/2$,  it is straightforward to adapt the argument of Mattila \cite{M} to prove that
\begin{equation}\label{dzz}
\|\widehat{\nu}(R\,\cdot\,)\|^2_{L^2(d\Gamma)}\lesssim
\|\nu\|[\nu]_{\alpha}R^{-\alpha}\end{equation} whenever 
$R> 1$ and $[\nu]_{\alpha}<\infty$; see for example  \cite{S2}. By a duality argument, Strichartz estimates with respect to fractal measures imply decay estimates of this form. In the opposite direction, it is relatively easy to prove that decay estimates imply Strichartz estimates with $p=1$, however the estimates also hold in the range $1\le p\le 2$ with the same regularity. 

\begin{lemma}\label{jack-up} Let $1\le p\le 2$, $0<\alpha\le\frac{n-1}{2}$ and $s>\frac{n-\alpha}{2}$. Then
$$
\Big(\int\!\!\int |e^{it\sqrt{-\Delta}} f(x)|^p  d\nu(x,t)\Big)^{1/p} \lesssim \|\nu\|^{1/p-1/2}[\nu]^{1/2}_\alpha\|f\|_{H^{s}(\R^n)}.
$$
\end{lemma}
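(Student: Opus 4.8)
The plan is to reduce first to the endpoint $p=2$ and then to extract that endpoint from the decay estimate~\eqref{dzz} by the same duality argument used in the proof of Proposition~\ref{well}. For the reduction, recall that $\nu$ is compactly supported, so that $\|\nu\|=\nu(\R^{n+1})\lesssim[\nu]_\alpha$, and that H\"older's inequality gives $\|F\|_{L^p(d\nu)}\le\|\nu\|^{1/p-1/2}\|F\|_{L^2(d\nu)}$ for $1\le p\le2$; applying this with $F=e^{it\sqrt{-\Delta}}f$ we see that it suffices to prove
$$
\Big(\int\!\!\int |e^{it\sqrt{-\Delta}} f(x)|^2  d\nu(x,t)\Big)^{1/2} \lesssim [\nu]^{1/2}_\alpha\|f\|_{H^{s}(\R^n)},\qquad s>\tfrac{n-\alpha}{2}.
$$
One could instead establish the case $p=1$ directly, as hinted above, and interpolate with $p=2$; the H\"older reduction is shorter and gives the same constants.

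For the $L^2$ estimate I would decompose $\widehat f$ into Littlewood--Paley pieces, writing $f=\sum_{k\ge0}f_k$ with $\widehat{f_0}$ supported in $\{|\xi|<2\}$ and $\widehat{f_k}$ supported in the annulus $\{2^k\le|\xi|<2^{k+1}\}$ for $k\ge1$, and bound $\|e^{it\sqrt{-\Delta}} f_k\|_{L^2(d\nu)}$ separately. The piece $f_0$ is harmless: $|e^{it\sqrt{-\Delta}} f_0(x)|\lesssim\|\widehat{f_0}\|_{L^1}\lesssim\|f_0\|_{L^2}\le\|f\|_{H^s}$ since $s>0$, and integrating against $\nu$ costs only $\|\nu\|^{1/2}\lesssim[\nu]^{1/2}_\alpha$. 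For $k\ge1$, rescaling $\xi=2^k\eta$ in the formula $e^{it\sqrt{-\Delta}} f_k(x)=\int\widehat{f_k}(\xi)\,e^{2\pi i((x,t)\cdot(\xi,|\xi|))}\,d\xi$ exhibits $e^{it\sqrt{-\Delta}} f_k$ as the cone extension operator evaluated at $2^k(x,t)$: in the coordinate $\eta$ on the annulus $\{1\le|\eta|<2\}$ the measure $d\Gamma$ is a constant multiple of $d\eta$, so that $e^{it\sqrt{-\Delta}} f_k(x)=(g_k\,d\Gamma)^\vee\big(2^k(x,t)\big)$ with $g_k(\eta,|\eta|)$ a constant multiple of $2^{kn}\widehat{f_k}(2^k\eta)$, whence $\|g_k\|_{L^2(d\Gamma)}^2\approx 2^{kn}\|f_k\|_{L^2}^2$. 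Writing~\eqref{dzz} in the dual form $\|(g\,d\Gamma)^\vee(R\,\cdot\,)\|^2_{L^2(d\nu)}\lesssim R^{-\alpha}[\nu]_\alpha\|g\|^2_{L^2(d\Gamma)}$, valid for $R>1$ (obtained exactly as in Proposition~\ref{well}; see \cite[pp.~610]{BBCR}), and taking $R=2^k$, we obtain
$$
\|e^{it\sqrt{-\Delta}} f_k\|_{L^2(d\nu)}\ \lesssim\ 2^{k(n-\alpha)/2}[\nu]^{1/2}_\alpha\,\|f_k\|_{L^2},\qquad k\ge1.
$$

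It then remains to sum in $k$. On the support of $\widehat{f_k}$ one has $\|f_k\|_{L^2}\lesssim 2^{-ks}\|f_k\|_{H^s}$, and since $s>\tfrac{n-\alpha}{2}$ the weights $2^{k((n-\alpha)/2-s)}$ form a summable geometric sequence, so Cauchy--Schwarz together with $\sum_k\|f_k\|_{H^s}^2\le\|f\|_{H^s}^2$ yields $\sum_{k\ge1}2^{k(n-\alpha)/2}\|f_k\|_{L^2}\lesssim\|f\|_{H^s}$; adding the low-frequency contribution closes the argument and, after the H\"older reduction, gives the lemma for all $1\le p\le 2$. I do not anticipate a genuine obstacle: the only delicate point is that this final $\ell^1$-summation of the Littlewood--Paley pieces is precisely what forces the strict inequality $s>\tfrac{n-\alpha}{2}$ rather than $s\ge\tfrac{n-\alpha}{2}$ — an $\varepsilon$ of regularity is lost in passing from a single frequency scale, where \eqref{dzz} is sharp, to a general $f\in H^s(\R^n)$ — which is exactly the regularity asserted.
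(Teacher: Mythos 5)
Your reduction to $p=2$ by H\"older, your Littlewood--Paley decomposition, the rescaling identity expressing $e^{it\sqrt{-\Delta}}f_k$ via the cone extension operator, and the final geometric summation are all fine and in fact mirror the last line of the paper's proof. The gap is in the one step you dismiss as routine: the claim that \eqref{dzz} can be ``written in the dual form'' $\|(g\,d\Gamma)^\vee(R\,\cdot\,)\|^2_{L^2(d\nu)}\lesssim R^{-\alpha}[\nu]_\alpha\|g\|^2_{L^2(d\Gamma)}$, ``exactly as in Proposition~\ref{well}.'' This is not a duality statement. Genuine $L^2$--$L^2$ duality would require the \emph{operator} $h\mapsto\widehat{h\nu}(R\,\cdot\,)$ to be bounded $L^2(d\nu)\to L^2(d\Gamma)$ with constant $([\nu]_\alpha R^{-\alpha})^{1/2}$, whereas \eqref{dzz} only tests that operator on the single function $h\equiv 1$. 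The only duality one gets for free from \eqref{dzz} is the following: decomposing a general $g$ into positive pieces and using $[g\nu]_\alpha\le\|g\|_{L^\infty}[\nu]_\alpha$ gives $\|\widehat{g\nu}(R\,\cdot\,)\|^2_{L^2(d\Gamma)}\lesssim\|\nu\|[\nu]_\alpha R^{-\alpha}\|g\|^2_{L^\infty(d\nu)}$, whose dual is an $L^2(d\Gamma)\to L^1(d\nu)$ bound (the paper's \eqref{ll1}), not $L^2\to L^2$. If you instead try a $T^*T$/Schur argument, the Schur test needs $L^1$-control of $\widehat{\nu}$ over translated cones, which Cauchy--Schwarz from \eqref{dzz} delivers only at rate $R^{-\alpha/2}$, with a spurious $\|\nu\|^{1/2}$; this is strictly weaker.

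This is precisely why the paper's proof of Lemma~\ref{jack-up} is not two lines: it extracts only the $L^1(d\nu)$ estimate \eqref{ll1} from duality, upgrades it to the weak $(2,2)$ inequality \eqref{trot} via Chebyshev (replacing $\nu$ by $\nu(E)^{-1}\nu$), truncates the layer-cake at height $R^{n/2}\|f\|_2$ using the trivial pointwise bound \eqref{retro}, integrates to get an $L^q(d\nu)$ bound for $q>2$, and finally interpolates between $L^1$ and $L^q$ to land at $L^2$ with an $\varepsilon$-loss --- that loss, not the Littlewood--Paley summation, is the real source of the strict inequality $s>\frac{n-\alpha}{2}$. Your outline substitutes an assertion for exactly this chain. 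The appeal to Proposition~\ref{well} doesn't close it either: that proposition concerns the sphere and takes the dual-form statement there as an input from~\cite{BBCR}, so it provides no proof for the cone and in particular does not address the delicate point that a translated unit cone annulus passes through the origin. To complete your argument you would either have to reproduce the weak-type/layer-cake/interpolation upgrade, or supply an independent proof of the $L^2(d\Gamma)\to L^2(d\nu)$ adjoint bound.
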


\begin{proof}
By writing $g=g_1-g_2 + i(g_3-g_4)$, with $g_j$ positive, and then applying the triangle inequality, \eqref{dzz} implies that
$$
\|\widehat{g\nu}(R\,\cdot\,)\|^2_{L^2(d\Gamma)}\lesssim
\|\nu\|[\nu]_{\alpha}R^{-\alpha}\|g\|^2_{L^\infty(d\nu)}.
$$
The dual estimate for the adjoint operator can be written as
\begin{equation*}
\|(h\Gamma_{\!R})^\vee\|^2_{L^1(d\nu)}\lesssim
\|\nu\|[\nu]_{\alpha}R^{n-\alpha}\|h\|^2_{L^2(d\Gamma_{\!R})},
\end{equation*}
where $\Gamma_{\!R}$ is the surface measure on $\{ (\xi,|\xi|)\in \mathbb{R}^{n+1} : R\le |\xi| < 2R\}.$ Setting $\widehat{f}(\xi)=h(\xi,|\xi|)$, this can be rewritten as
\begin{equation}\label{ll1}
\int\!\!\int |e^{it\sqrt{-\Delta}} f(x)| d\nu(x,t)\lesssim
\|\nu\|^{1/2}[\nu]^{1/2}_{\alpha}R^{\frac{n-\alpha}{2}}\|f\|_{2}
\end{equation}
whenever $\supp \widehat{f}\subset B(0,2R)\backslash B(0,R)$. 

It remains to prove the estimate with $p=2$; the other estimates follow by H\"older's inequality. Consider the sets 
$
E:=\{\, (x,t)\, : \, |e^{it\sqrt{-\Delta}} f(x)|\ge \lambda\,\}
$
and replace $\nu$ by the normalised measures $\nu(E)^{-1}\nu$ in \eqref{ll1}. Then, by Chebyschev's inequality, we obtain the weak (2,2) inequality
\begin{equation}\label{trot}
 \nu(\{\, (x,t)\in\R^{n+1}\, : \, |e^{it\sqrt{-\Delta}} f(x)|\ge \lambda\,\}) \lesssim \lambda^{-2}R^{n-\alpha}[\nu]_\alpha\|f\|^2_{2},
\end{equation}
whenever $\supp \widehat{f}\subset B(0,2R)\backslash B(0,R)$.
Now by an application of H\"older's inequality it is easy to see that
\begin{equation}\label{retro}
|e^{it\sqrt{-\Delta}} f(x)|\lesssim R^{n/2} \|f\|_{2}
\end{equation}
whenever $\supp \widehat{f}\subset B(0,2R)$, allowing us to limit the range of integration in the layer-cake representation;
$$
\int\!\!\int |e^{it\sqrt{-\Delta}} f(x)|^q d\nu(x,t)= q\int_0^{cR^{n/2}\|f\|_{2}} \lambda^{q-1}  \nu(\{\, (x,t)\, : \, |e^{it\sqrt{-\Delta}} f(x)|\ge \lambda\,\})\, d\lambda.
$$
Plugging \eqref{trot} into the representation and integrating, we obtain a bound  for all $q>2$. Taking $q$ close to $2$, and plugging the estimate into the inequality
$$
\|F\|_{L^2(d\nu)}\lesssim \|F\|^{1-\frac{q'}{2}}_{L^1(d\nu)}\|F\|^{\frac{q'}{2}}_{L^q(d\nu)}
$$
along with  \eqref{ll1} yields 
\begin{equation*}
\Big(\int\!\!\int |e^{it\sqrt{-\Delta}} f(x)|^2 d\nu(x,t)\Big)^{1/2}\lesssim R^{s} [\nu]^{1/2}_\alpha \|f\|_{L^2(\R^n)},\qquad s>\frac{n-\alpha}{2},
\end{equation*}
whenever $\supp \widehat{f} \subset B(0,2R)\backslash B(0,R)$.
Here we have used that $\|\nu\|\lesssim [\nu]_\alpha$ when the measures are compactly supported. After a dyadic decomposition,  the triangle inequality and  summing a geometric series, using \eqref{retro} to deal with the low frequencies, the result follows.
\end{proof}

\end{document}